\theoremstyle{plain}
\newtheorem{theorem}{Theorem}[section]
\newtheorem{prop}[theorem]{Proposition}
\newtheorem{lemma}[theorem]{Lemma}
\theoremstyle{definition}
\newtheorem{dfn}[theorem]{Definition}
\newtheorem{rem}[theorem]{Remark}
\numberwithin{equation}{section}
\newcommand{\C}{K}
\DeclareMathOperator{\GL}{GL}
\DeclareMathOperator{\SL}{SL}
\DeclareMathOperator{\Sp}{Sp}
\DeclareMathOperator{\RSK}{RSK}
\renewcommand{\O}{\operatorname{O}}
\DeclareMathOperator{\SO}{SO}
\renewcommand{\phi}{\varphi}
\DeclareMathOperator{\SSYT}{SSYT}
\DeclareMathOperator{\SYT}{SYT}
\newcommand{\posarrow}[2]{ \underset{\substack{\textstyle\uparrow\\\hidewidth\mathstrut#2\hidewidth}}{#1}}
\begin{document}

\title{Tensor invariants for classical groups revisited}

\author{William Q.~Erickson}
\address{
William Q.~Erickson\\
Department of Mathematics\\
Baylor University \\ 
One Bear Place \#97328\\
Waco, TX 76798} 
\email{Will\_Erickson@baylor.edu}

\author{Markus Hunziker}
\address{
Markus Hunziker\\
Department of Mathematics\\
Baylor University \\ 
One Bear Place \#97328\\
Waco, TX 76798} 
\email{Markus\_Hunziker@baylor.edu}

\begin{abstract}
We reconsider an old problem, namely the dimension of the $G$-invariant subspace in $V^{\otimes p} \otimes V^{*\otimes q}$, where $G$ is one of the classical groups $\GL(V)$, $\SL(V)$, $\O(V)$, $\SO(V)$, or $\Sp(V)$.
Spanning sets for the invariant subspace have long been well known, but linear bases are more delicate.
The main contribution of this paper is a combinatorial realization of linear bases via standard Young tableaux and arc diagrams, in a uniform manner for all five classical groups.
As a secondary contribution, we survey the many equivalent ways --- some old, some new --- to enumerate the elements in these bases.
\end{abstract}

\subjclass[2020]{Primary 05E10; Secondary 16W22, 05A19}

\keywords{Tensor invariants, classical groups, arc diagrams, standard Young tableaux, standard monomials, RSK correspondence}

\maketitle

\tableofcontents

\section{Introduction}

The aim of this paper is to write down an exhaustive list of combinatorial answers to the following classical problem: if we are given a finite-dimensional vector space $V$, then for each of the classical groups $G \subseteq \GL(V)$, what is the dimension of the invariant subspace $(V^{\otimes p} \otimes V^{*\otimes q})^G$?
It is straightforward to specify a spanning set in each case; this is the content of Weyl's first fundamental theorems for tensor invariants~\cite{Weyl}.
Due to the relations, however, which are described in the second fundamental theorems, it is less straightforward to determine a linear basis.
Surprisingly, we are unable to find a reference anywhere in the invariant theory literature which solves this combinatorial problem in a uniform manner for all five classical groups.
As a main result of the present paper (Theorems~\ref{thm:multilinear GL O Sp} and~\ref{thm:multilinear SL SO}), we give combinatorial realizations of linear bases as sets of arc diagrams, uniformly across all classical groups $\GL(V)$, $\SL(V)$, $\O(V)$, $\SO(V)$, and $\Sp(V)$.
As another main result (Theorems~\ref{thm:enumerate GL O Sp} and \ref{thm:enumerate SL SO}), we catalog several equivalent ways to enumerate these linear bases.

Our paper takes the following approach.
Let $V$ be a finite-dimensional vector space over a field $\C$ of characteristic 0, and let $W \coloneqq V^{*p} \oplus V^{q}$.
We begin by addressing the (more general) problem of \emph{polynomial invariants}, that is, elements of the invariant ring $\C[W]^G$.
For each classical group $G$, there is a canonical linear basis for $\C[W]^G$ consisting of \emph{standard monomials}, due to DeConcini--Procesi~\cite{DeConciniProcesi}, obtained via the straightening laws of Doubilet--Rota--Stein~\cite{DRS}.
The technique came to be known as \emph{standard monomial theory}, introduced independently by Seshadri~\cite{Seshadri}.
A highly detailed treatment of the standard monomial bases for $\C[W]^G$ is also given by Lakshmibai--Raghavan~\cite{Lakshmibai}*{Ch.~10--12}.
We devote Section~\ref{sec:CIT standard monomials} to recalling these bases of standard monomials, and to realizing them as sets of semistandard tableaux which we denote by $\mathcal{S}^G$.

In Section~\ref{sec:RSK ordinary monomials}, we establish another linear basis for $\C[W]^G$, which we denote by $\mathcal{B}^G$.
Unlike the basis $\mathcal{S}^G$, the basis $\mathcal{B}^G$ consists of \emph{ordinary monomials}, meaning true monomials in the contractions and determinants that generate $\C[W]^G$.
(By contrast, the standard monomials in $\mathcal{S}^G$ are not actually monomials in these generators.)
We obtain $\mathcal{B}^G$ by applying certain variants of the Robinson--Schensted--Knuth (RSK) correspondence~\cites{Knuth} to the basis $\mathcal{S}^G$: in particular, each RSK-type correspondence yields a multidegree-preserving bijection between the tableaux in $\mathcal{S}^G$ and the degree matrices (in a set we call $\mathcal{M}^G$) of the ordinary monomials in $\mathcal{B}^G$:
\begin{equation}
\label{proof picture}
\begin{array}{ccccc}
    \text{tableaux in $\mathcal{S}^G$} & \xleftrightarrow{{\rm RSK}} & \text{$\mathbb{N}$-matrices in $\mathcal{M}^G$} & & \\
   \updownarrow & & \updownarrow & & \\
   \text{standard monomials} & & \text{ordinary monomials in $\mathcal{B}^G$} & \longleftrightarrow & \text{arc diagrams}
\end{array}
\end{equation}

\noindent This view of RSK as a transformation between linear bases seems to have been introduced by Sturmfels~\cite{Sturmfels}, Herzog--Trung~\cite{HerzogTrung}, and Conca~\cite{Conca} in the context of determinantal rings; to the best of our knowledge, however, the basis $\mathcal{B}^G$ has not arisen in the invariant theory literature.

From our viewpoint, the combinatorial advantage of $\mathcal{B}^G$ is that its elements can naturally be viewed as \emph{arc diagrams}, as indicated in~\eqref{proof picture}.
This is due to the fact that the basis elements are ordinary monomials in certain contractions $f_{ij}$ (each of which can be viewed as an arc between vertices $i$ and $j$) and in certain determinants (which can be viewed as hyperedges).
The theorems in Section~\ref{sec:polynomial bases} realize $\mathcal{B}^G$ as the set of these arc diagrams.
In order to specialize from polynomial invariants to tensor invariants --- the motivating problem in this paper --- we simply restrict our attention to arc diagrams that are 1-regular (i.e., those in which each vertex has degree 1).
In this way, our theorems in Section~\ref{sec:tensor bases} state how to realize basis elements for tensor invariants as 1-regular arc diagrams.
As an example, where $\dim V = 3$ and $G = \SO(V)$ acts on $V^{\otimes 15}$, a typical $G$-invariant basis element is shown below, displayed beneath its corresponding tableau and arc diagram:

\vspace{-2ex}

\begin{center}
\begin{tikzpicture}[baseline, scale = .75, every node/.append style={transform shape}]
    \node at (0,0) {$\ytableausetup{nosmalltableaux} 
\begin{ytableau} 
 *(lightgray)1&2&5&10&11&12&13\\
 *(lightgray)3&4&6&14&15\\ 
 *(lightgray)7&8&9 
 \end{ytableau}$};
\end{tikzpicture}
\quad
\begin{tikzpicture}[scale=.6,-, node distance = .8 cm,
  thick, inner sep=0 pt, plain node/.style={circle,draw,font=\sffamily\bfseries,fill=white,minimum size = 0.6cm}, painted node/.style={minimum size=0.6cm,inner sep=0pt,circle,draw,font=\sffamily\bfseries,fill=lightgray, text=black},every node/.append style={transform shape},bend left = 60, baseline]

\node[painted node] (1) {1};
\node[plain node] (2) [right of=1] {2};
\node[painted node] (3) [right of=2] {3};
\node[plain node] (4) [right of=3] {4};
\node[plain node] (5) [right of=4] {5};
\node[plain node] (6) [right of=5] {6};
\node[painted node] (7) [right of=6] {7};
\node[plain node] (8) [right of=7] {8};
\node[plain node] (9) [right of=8] {9};
\node[plain node] (10) [right of=9] {10};
\node[plain node] (11) [right of=10] {11};
\node[plain node] (12) [right of=11] {12};
\node[plain node] (13) [right of=12] {13};
\node[plain node] (14) [right of=13] {14};
\node[plain node] (15) [right of=14] {15};

\draw (2) to (5);
\draw (4) to (13);
\draw (6) to (12);
\draw (8) to (15);
\draw (9) to (14);
\draw (10) to (11);
\end{tikzpicture}
\end{center}

\vspace{-2ex}
\[
v_1 \otimes \cdots \otimes v_{15} \longmapsto \det(v_1, v_3, v_7) \: b(v_2, v_5) \: b(v_4, v_{13}) \: b(v_6, v_{12}) \: b(v_8, v_{15}) \: b(v_9, v_{14}) \: b(v_{10},v_{11})
\]

\noindent The (shaded) first column of the tableau corresponds to the (shaded) hyperedge $\{1,3,7\}$ in the arc diagram, which in turn corresponds to the determinant in the explicit basis element written beneath.
(Here we employ the natural $G$-module isomorphism $V^{\otimes 15} \cong (V^{\otimes 15})^*$ to write a tensor invariant as a product of contractions, where $b( \:, \:)$ denotes the nondegenerate symmetric bilinear form preserved by $\SO(V)$.)
Note that these contractions can be read off directly from the arcs in the diagram.
The RSK-type correspondence takes the unshaded part of the tableau to the adjacency matrix of the arc diagram. 

We turn in Section~\ref{sec:enumeration} to the question of enumeration: in particular, we collect alternative interpretations of the dimension of the tensor invariants from the combinatorics literature, which we present together in Theorems~\ref{thm:enumerate GL O Sp} and~\ref{thm:enumerate SL SO}.
In the appendix, we include several tables where we organize and illustrate the results in these enumerative theorems.
In Table~\ref{table:OEIS}, we collect our dimension formulas into ``sequences of sequences'' which can be found in the Online Encyclopedia of Integer Sequences (OEIS).
For certain parameters (when $\dim V$ is quite small), the OEIS entries include a wealth of combinatorial interpretations; for others, however, the OEIS entry (if it exists) was previously created solely as the result of a computation in the program LiE.
Tables~\ref{table:SL example}--\ref{table:Sp example} are concrete examples for given parameters, showing the arc diagrams and corresponding tableaux which constitute our linear bases.

As mentioned above, it seems that our bases $\mathcal{B}^G$ and their corresponding arc diagrams are new;
regardless, our larger purpose here is to shed some light on a classical problem, and in doing so, to unify related enumerative results that are currently scattered in the vast literature.
This paper also offers an additional perspective on recent work~\cites{BostanEtAl,BDM24,LinshawSong,RSS12,Schilling23} pertaining to canonical bases, standard monomials, and invariant theory.
These works build upon a long and rich legacy in combinatorial invariant theory, from the 1970s (see De Concini--Procesi~\cite{DeConciniProcesi}, D\'{e}sarm\'{e}nien--Kung--Rota~\cite{DKR}, and Stanley~\cite{StanleyCombInvThy}) to more recent graphical approaches such as Kuperberg's webs~\cite{Kuperberg} and the wave diagrams defined in~\cite{MihailovsThesis}*{Ch.~2}.
See also the papers~\cites{WestburyGL,RubeyWestburySymplectic} studying combinatorial properties of tensor invariants for $\GL(V)$ and $\Sp(V)$.
On the enumerative side, we refer the reader to the survey~\cite{Post} of arc diagrams, Young tableaux, and lattice walks.
In some form, the use of graphs to encode polynomial invariants dates back at least to Sylvester~\cites{Sylvester,GY} and Kempe~\cite{Kempe};
likewise, some of the fundamental theorems proved by Weyl were known much earlier, for instance to Cayley~\cite{Cayley}, Clebsch~\cite{Clebsch}, Gordan~\cite{Gordan}, and Study~\cite{Study}; in particular, we refer the reader to the elegantly illustrated historical overview~\cite{AA}*{pp.~13--15} of the group $\SL_2$.

Throughout the paper, we present our major propositions/theorems in pairs, where the first result pertains to $\GL(V)$, $\O(V)$, and $\Sp(V)$, and the second to $\SL(V)$ and $\SO(V)$.
In each result, part (a) concerns $\GL(V)$ or $\SL(V)$, part (b) concerns $\O(V)$ or $\SO(V)$, and part (c) concerns $\Sp(V)$.

\subsection*{Acknowledgments}

The authors are deeply grateful to the anonymous referees for their suggestions which vastly improved the structure and clarity of the paper.

\section{Classical invariant theory and standard monomials}
\label{sec:CIT standard monomials}

\subsection*{The classical groups}

Throughout the paper, we let $\GL(V)$ denote the \emph{general linear group} of invertible linear operators on a finite-dimensional vector space $V$ over a field $\C$ of characteristic 0.
If $V$ is equipped with a nondegenerate symmetric bilinear form $b$, then we write $\O(V) \subset \GL(V)$ to denote the \emph{orthogonal group} preserving $b$.
If the form is instead a nondegenerate skew-symmetric bilinear form $\omega$, then we write $\Sp(V) \subset \GL(V)$ to denote the \emph{symplectic group} preserving $\omega$.
When $G = \O(V)$ or $\Sp(V)$, this bilinear form induces a canonical $G$-module isomorphism $V \cong V^*$, and for this reason we will not distinguish between $V$ and its dual space $V^*$ when considering these groups.
The \emph{special linear group} $\SL(V) \subset \GL(V)$ and \emph{special orthogonal group} $\SO(V) \subset \O(V)$ are the subgroups consisting of operators with determinant 1.
Collectively, the five families described above are known as the \emph{classical groups}.
Throughout the paper, we use the parameter $n$ to denote the dimension of $V$, except in the case $G = \Sp(V)$, where we put  $\dim V = 2n$.

\subsection*{Polynomial invariants and multidegree}

Let $\mathbb{N}$ denote the set of nonnegative integers.
We write $V^m$ or $V^{*m}$ to denote the direct sum of $m$ copies of $V$ or its dual space $V^*$, respectively.
In the following exposition, when giving a unified treatment of the classical groups, we set
\[
W \coloneqq \begin{cases}
    V^{*p} \oplus V^q, & G = \GL(V) \text{ or } \SL(V),\\
    V^m, & G = \O(V) \text{ or } \SO(V) \text{ or } \Sp(V).
\end{cases}
\]
In the first case above, we write an element of $W$ as $(\phi_1, \ldots, \phi_p, v_1, \ldots, v_q)$, and in the second case we write an element of $W$ as $(v_1, \ldots, v_m)$.
The natural diagonal action of $G$ on $W$ extends to a linear action on the space $\C[W]$ of $\C$-valued polynomial functions on $W$, via $(g \cdot f)(w) = f(g^{-1} \cdot w)$ for all $g \in G$, $f \in \C[W]$, and $w \in W$.
The object of interest in Sections~\ref{sec:CIT standard monomials}--\ref{sec:polynomial bases} is the ring of invariants
\[
\C[W]^G \coloneqq \{f \in \C[W] : f(g \cdot w) = f(w) \text{ for all $g \in G$ and $w \in W$}\}.
\]

There is a natural multigradation on $\C[W]^G$, which we will generally designate with a boldface $\mathbf{d}$.
For $G = \GL(V)$ or $\SL(V)$, the action of the torus $(\C^\times)^p \times (\C^\times)^q$ leads to an $\mathbb{N}^p \times \mathbb{N}^q$-gradation on $\C[V^{*p} \oplus V^q]$; see~\cite{GW}*{p.~256}.
For $G = \GL(V)$ and $\SL(V)$, then, the symbol $\mathbf{d}$ actually represents an ordered pair $\mathbf{d} = (\mathbf{d}_p, \mathbf{d}_q) \in \mathbb{N}^p \times \mathbb{N}^q$, and we write $\C[V^{*p} \oplus V^q]_{(\mathbf{d}_p, \mathbf{d}_q)}$ to denote the corresponding homogeneous multigraded component.
When $G = \O(V)$ or $\SO(V)$ or $\Sp(V)$, we write $\C[V^m]_\mathbf{d}$ where $\mathbf{d} \in \mathbb{N}^m$.
In general, we write $\C[W]^G_{\mathbf{d}} \coloneqq \C[W]^G \cap \C[W]_{\mathbf{d}}$.

\subsection*{Fundamental theorems and Gr\"obner bases}

For $G = \GL(V)$, $\O(V)$, or $\Sp(V)$, we describe the structure of $\C[W]^G$ in Proposition~\ref{prop:FFT SFT GL O Sp} below.
In each case, $\C[W]^G$ is generated by certain contractions $f_{ij}$, and the relations are given by the vanishing of certain determinants (or Pfaffians) of minors in the $f_{ij}$'s.
The generation by the $f_{ij}$'s is the content of the first fundamental theorem (FFT) of classical invariant theory, and the relations are the content of the second fundamental theorem (SFT).
In the case where $\C = \mathbb{C}$, the FFT and SFT are given in Weyl's book~\cite{Weyl}; see also~\cite{Kraft}.
In the proposition below, we use the term \emph{$(n+1)$-minor} to mean the determinant of an arbitrary $(n+1) \times (n+1)$ minor of a matrix.
By a \emph{$2(n+1)$-Pfaffian} of an alternating matrix, we mean the Pfaffian of a $2(n+1) \times 2(n+1)$ alternating matrix obtained by choosing $2(n+1)$ many indices as its rows and columns.
We use $x_{ij}$ as indeterminates.

\begin{prop}
\label{prop:FFT SFT GL O Sp}

Let $G = \GL(V)$, $\O(V)$, or $\Sp(V)$.
Let $\dim V = n$ if $G = \GL(V)$ or $\O(V)$, and let $\dim V = 2n$ if $G = \Sp(V)$.
We have an isomorphism of algebras
\begin{align}
\label{quotient}
\begin{split}
       \C[\{x_{ij}\}] / \langle \mathcal{R} \rangle & \longrightarrow \C[W]^G, \\ 
   x_{ij} & \longmapsto f_{ij},
\end{split}
\end{align}
where the generators $f_{ij}$ and relations $\mathcal{R}$ are given in Table~\ref{table:FT}.
Moreover, with respect to the lexicographic order induced by the variable order given in the table, $\mathcal{R}$ is a Gr\"obner basis for $\langle \mathcal{R} \rangle$.

\end{prop}

\begin{table}[t]
    \begin{center}
\resizebox{\linewidth}{!}{
\begin{tblr}{colspec={|Q[m,c]|Q[m,c]|Q[m,c]|Q[m,c]|},stretch=1.5}

\hline

$G$ & $\GL(V)$ & $\O(V)$ & $\Sp(V)$ \\ \hline[2pt]

$(i,j)$ & $1 \leq i \leq p, \: 1 \leq j \leq q$ & $1 \leq i \leq j \leq m$ & $1 \leq i < j \leq m$ \\ \hline

$f_{ij}$ & $(\phi_1, \ldots, \phi_p, v_1, \ldots, v_q) \mapsto \phi_i(v_j)$ & $(v_1, \ldots, v_m) \mapsto b(v_i, v_j)$ & $(v_1, \ldots, v_m) \mapsto \omega(v_i, v_j)$ \\ \hline

$\mathcal{R}$ & $\left\{ \parbox{3.1cm}{\centering \normalfont $(n+1)$-minors \\ of the matrix $[x_{ij}]$}\right\}$
&
$\left\{ \parbox{3cm}{\centering \normalfont $(n+1)$-minors \\ of the symmetric matrix $[x_{ij}]$}\right\}$
&
$\left\{ \parbox{3.2cm}{\centering \normalfont $2(n+1)$-Pfaffians \\ of the alternating matrix $[x_{ij}]$}\right\}$ \\ \hline

{\normalfont Var.\\order} & $\begin{aligned}
    &x_{1q} > \cdots > x_{11} > \\
    & x_{2q} >  \cdots > x_{21} > \\
    &\phantom{x_{pq} >} \cdots > x_{p1}
\end{aligned}$ & $\begin{aligned}
    &x_{11} > \cdots > x_{1m} > \\
    & x_{22} >  \cdots > x_{2m} >\\
    & \phantom{x_{22} >}  \cdots > x_{mm} \\
\end{aligned}$ & $\begin{aligned}
    &x_{1m} > \cdots > x_{12} > \\
    & x_{2m} >  \cdots > x_{23} >\\
    & \phantom{x_{2m}. >}  \cdots > x_{m-1,m} \\
\end{aligned}$ \\ \hline

\end{tblr}
}
\end{center}
    \caption{Data accompanying Proposition~\ref{prop:FFT SFT GL O Sp}.}
    \label{table:FT}
\end{table}

\begin{rem} 
\label{rem: [fij]}
For $G = \O(V)$ (resp., $\Sp(V)$), note that $[x_{ij}]$ is the $m \times m$ matrix of indeterminates whose lower triangular entries are obtained by putting $x_{ji} = x_{ij}$ (resp., $x_{ji} = -x_{ij}$).
\end{rem}

\begin{proof}
    
    The generators $f_{ij}$ are given by the FFT; see DeConcini--Procesi~\cite{DeConciniProcesi}, in particular Theorems 3.1, 5.6(i), and 6.6.
    The minimal set $\mathcal{R}$ of relations is given by the SFT; again, see~\cite{DeConciniProcesi}, in particular Theorems 3.4, 5.7, and 6.7, respectively.
    The isomorphism~\eqref{quotient} follows immediately.

    The fact that $\mathcal{R}$ is a Gr\"obner basis follows from well-known results concerning determinantal rings.
    The rings on the left-hand side of~\eqref{quotient}, namely quotients of polynomial rings by ideals $\langle \mathcal{R} \rangle$ generated by minors or Pfaffians, were studied intensely in the 1990s, although not in the context of classical invariant theory.
    In particular, for the quotient corresponding to each of the three groups $G$ in Table~\ref{table:FT}, it is known that $\mathcal{R}$ is a Gr\"obner basis for $\langle \mathcal{R} \rangle$, with respect to the monomial ordering induced by the variable order in the table above.
    For the quotient corresponding to $G = \GL(V)$, see Sturmfels~\cite{Sturmfels}*{Thm.~1}; for $G = \O(V)$, see Conca~\cite{Conca}*{Thm.~2.8}; for $G = \Sp(V)$, see Herzog--Trung~\cite{HerzogTrung}*{Thm.~5.1}.
\end{proof}

\subsection*{Standard monomials via tableaux}

A \emph{partition} is a weakly decreasing sequence of positive integers, which we denote by $\lambda = (\lambda_1, \ldots, \lambda_r)$.
We call $r$ the \emph{length} of $\lambda$, written as $\ell(\lambda) = r$.
We use the shorthand $(a^r) \coloneqq (a, \ldots, a)$, and we can take a componentwise sum $\lambda + \mu$ by setting $(\lambda + \mu)_i = \lambda_i + \mu_i$, padding the shorter partition with zeros as necessary.
If $s = \sum_i \lambda_i$, then we say that $\lambda$ is a partition of $s$, written as $\lambda \vdash s$.
By a \emph{semistandard Young tableau} of shape $\lambda$, we mean a left-justified arrangement of boxes, where the $i$th row from the top contains $\lambda_i$ many boxes, and where the boxes are filled with entries such that every row is weakly increasing and every column is strictly increasing.
Let $\SSYT(\lambda, m)$ denote the set of all semistandard Young tableaux of shape $\lambda$, with entries taken from the set $[m] \coloneqq \{1, \ldots, m\}$.
The \emph{weight} of a tableau $T \in \SSYT(\lambda, m)$ is the vector ${\rm wt}(T) \in \mathbb{N}^m$ whose $i$th component is the number of occurrences of the entry $i$ in $T$.
In the case $G = \GL(V)$, we will study ordered pairs $(T,U) \in \SSYT(\lambda, p) \times \SSYT(\lambda,q)$, typically called \emph{bitableaux} in the literature; the weight of a bitableau is the ordered pair of weights ${\rm wt}(T,U) = ({\rm wt}(T), {\rm wt}(U))$.
When we refer to the \emph{rows} and \emph{columns} of a partition $\lambda$, we mean the rows and columns of a tableau with shape $\lambda$.
In particular, $\lambda$ is said to have \emph{even rows} (resp., columns) if every row (resp., column) of $\lambda$ contains an even number of boxes.

There is a natural way to view a semistandard (bi)tableau as an element of $\C[W]^G$, by viewing each tableau column as the set of row/column indices determining a minor (or Pfaffian) of the matrix formed by the generators $f_{ij}$, and then taking the product over all the columns in the (bi)tableau.
The elements of $\C[W]^G$ obtained in this way are called \emph{standard monomials}, which explains our notation $\mathcal{S}^G$ in Proposition~\ref{prop:standard monomial basis} below.
In particular, if $T$ is a semistandard tableau of shape $\lambda$, then let $T_\ell$ denote the set of entries in the $\ell$th column of $T$.
We sometimes write a tableau as the concatenation of its columns from left to right: $T = T_1 \cdots T_{\lambda_1}$.
If $I$ and $J$ are sets of positive integers such that $|I| = |J| \leq n$, then define the minor
    \begin{equation}
    \label{minor def}
        {\rm m}(I,J) \coloneqq \det[f_{ij}]_{\substack{i \in I\\ j \in J}}.
    \end{equation}
Likewise, for $|I|$ an even number at most $2n$, define the Pfaffian
\begin{equation}
    \label{pffafian def}
    {\rm pf}(I) \coloneqq {\rm pf}[f_{ij}]_{i,j \in I}.
\end{equation}
Note that for $G = \O(V)$ or $\Sp(V)$, the matrix $[f_{ij}]$ is obtained by extending the definition of $f_{ij}$ to all $1 \leq i,j \leq m$, and is therefore symmetric or alternating, respectively.
This follows from the symmetry (resp., skew-symmetry) of the bilinear form $b$ (resp., $\omega$).

\begin{prop}
    \label{prop:standard monomial basis}
    Let $G = \GL(V)$, $\O(V)$, or $\Sp(V)$, with $n$ as in Proposition~\ref{prop:FFT SFT GL O Sp}.
    A linear basis for $\C[W]^G$ is given by the set $\mathcal{S}^G$ of (bi)tableaux, viewed as the set of \emph{standard monomials} in $\C[W]^G$ via the following identification:
    
    \begin{center} 
\resizebox{\linewidth}{!}{
\begin{tblr}{colspec={|Q[m,c]|Q[m,c]|Q[m,c]|Q[m,c]|},stretch=1.5}

\hline

$G$ & $\GL(V)$ & $\O(V)$ & $\Sp(V)$ \\ \hline[2pt]

$\mathcal{S}^{G}$ & \quad $\displaystyle \bigcup_{\mathclap{\substack{\lambda : \\ \ell(\lambda) \leq n}}} \SSYT(\lambda,p) \times \SSYT(\lambda,q)$ & $\displaystyle \bigcup_{\mathclap{\substack{\lambda: \\ \ell(\lambda) \leq n, \\ \textup{even rows}}}} \SSYT(\lambda, m)$ & \qquad $\displaystyle \bigcup_{\mathclap{\substack{\lambda: \\ \ell(\lambda) \leq 2n, \\ \textup{even columns}}}} \SSYT(\lambda, m)$ \\ \hline
% $\mathcal{B}^{\rm std}_{\mathbf{d}}$ & $\{(T,U) \in \mathcal{B}^{\rm std} : {\rm wt}(T,U) = (\mathbf{d}_p, \mathbf{d}_q) \}$  & $\{T \in \mathcal{B}^{\rm std} : {\rm wt}(T) = \mathbf{d} \}$ & $\{T \in \mathcal{B}^{\rm std} : {\rm wt}(T) = \mathbf{d} \}$ \\ \hline

{\normalfont As elements \\ of $\C[W]^G$} &  
$\displaystyle (T,U) = \prod_{\ell=1}^{\lambda_1} {\rm m}(T_\ell,U_\ell)$
& 
$\displaystyle T = \prod_{\ell=1}^{\lambda_1/2} {\rm m}(T_{2\ell-1}, T_{2\ell})$
& $T = \displaystyle \prod_{\ell=1}^{\lambda_1} {\rm pf}(T_\ell)$ \\ \hline

\end{tblr}
}
\end{center}

    \noindent Moreover, a linear basis for $\C[W]^G_{\mathbf{d}}$ is given by the subset
    \[
        \mathcal{S}^{G}_{\mathbf{d}} \coloneqq \{ \textup{(bi)tableaux in $\mathcal{S}^{G}$ with weight $\mathbf{d}$}\}.
    \]
    
    \end{prop}
    
\begin{proof}
    The fact that $\mathcal{S}^{G}$ furnishes a linear basis for $\C[W]^G$ is well known in the literature; see DeConcini--Procesi~\cite{DeConciniProcesi}, in particular Theorems 1.2, 5.1, and 6.5.
    (See also~\cite{Procesi}*{Ch.~13} for an updated exposition.)
    For even greater detail, see Lakshmibai--Raghavan~\cite{Lakshmibai}, in particular Theorems 10.3.1.4, 10.4.0.4, and 10.5.0.2(iii).

    To prove that $\mathcal{S}^{G}_{\mathbf{d}}$ furnishes a basis for the multigraded component $\C[W]^G_{\mathbf{d}}$, we must show that the weight of a (bi)tableau in $\mathcal{S}^{G}$ equals the multidegree of its corresponding basis element in $\C[W]^G$.
    First let $G = \GL(V)$.
    Each generator $f_{ij}$ has multidegree $(\mathbf{e}_i, \mathbf{e}_j)$, where $\mathbf{e}_i$ denotes the tuple with 1 in the $i$th component and 0's elsewhere.
    Let $(T,U) \in \mathcal{S}^{\GL(V)}$.
    For each $\ell$, the minor ${\rm m}(T_\ell, U_\ell)$ consists entirely of terms with multidegree $(\sum_{i \in T_\ell} \mathbf{e}_i, \sum_{j \in U_\ell} \mathbf{e}_j)$, and is therefore homogeneous.
    Moreover, its multidegree is obtained by counting the occurrences of each entry in $T_\ell$ and $U_\ell$.
    Therefore, by taking the product of these minors over all columns $\ell = 1, \ldots, \lambda_1$ and adding their multidegrees, we obtain a homogeneous polynomial whose multidegree is $({\rm wt}(T), {\rm wt}(U)) \eqcolon {\rm wt}(T,U)$, which completes the proof.
    The argument for $\O(V)$ and $\Sp(V)$ is identical, upon observing that each generator $f_{ij}$ has multidegree $\mathbf{e}_i + \mathbf{e}_j$.
\end{proof}

\begin{rem}
    Experts will notice that in defining $\mathcal{S}^{G}$, we have transposed the tableaux as they are often presented in the literature of invariant theory or determinantal rings.
    In~\cites{Procesi,DeConciniProcesi,Sturmfels,HerzogTrung,Conca}, for example, the tableaux are row-strict rather than column-strict, so that the determinants (or Pfaffians) correspond to rows rather than to columns.
    We choose to use column-strict tableaux throughout this paper, in order to follow the typical sense of the term ``semistandard'' in combinatorics.
    The tableaux appearing in $\mathcal{S}^{\O(V)}$, all of which have even rows, are (upon transposing to obtain even \emph{columns}) also known as ``d-tableaux,'' where the ``d'' stands for ``double''~\cite{Conca}*{Def.~1.5},
    or as ``dual tableaux''~\cites{Burge,Knuth}.
\end{rem}

In order to extend Proposition~\ref{prop:standard monomial basis} to the groups $\SL(V)$ and $\SO(V)$, we will allow the standard monomials in $\mathcal{S}^{\GL(V)}$ and $\mathcal{S}^{\O(V)}$ to be multiplied by certain determinants in the covectors $\phi_i$ or the vectors $v_j$.
We use the symbols $A$ and $B$ (with or without subscripts) to denote a set of $n$ positive integers (equivalently, a tableau column of length $n$).
We recall the \emph{tableau order} defined on finite sets of positive integers, viewed as tableau columns with entries increasing from top to bottom:
\begin{equation}
    \label{tableau order}
    A \leq A' \text{ if and only if $A$ can appear to the left of $A'$ in a semistandard tableau}.
\end{equation}
Given a semistandard tableau $T$, and assuming that $A_1 \leq \cdots \leq A_r \leq T_1$, we write $A_1 \cdots A_r T$ to denote the semistandard tableau obtained from $T$ by prepending the columns $A_1, \ldots, A_r$ from left to right.

\begin{prop}
    \label{prop:standard monomials SL SO} Let $n = \dim V$.

    \begin{enumerate}[label=\textup{(\alph*)}]
        \item \label{subprop:standard monomials SL SO a} Let $G = \SL(V)$.
        Let the contractions $f_{ij}$ be as defined in Table~\ref{table:FT} for $\GL(V)$, and let the minors ${\rm m}(I,J)$ be as defined in~\eqref{minor def}.
        For $A = \{a_1, \ldots, a_n\} \subseteq [p]$ and $B = \{b_1, \ldots, b_n\} \subseteq [q]$, define the functions
        \begin{align}
        \label{det SL}
        \begin{split}
            {\det}^*(A) &: (\phi_1, \ldots, \phi_p, v_1, \ldots, v_q) \longmapsto \det( \phi_{a_1}, \ldots, \phi_{a_n}),\\
            {\det}(B) &: (\phi_1, \ldots, \phi_p, v_1, \ldots, v_q) \longmapsto \det( v_{b_1}, \ldots, v_{b_n}).
            \end{split}
        \end{align}
        A linear basis for $\C[V^{* p} \oplus V^q]^{\SL(V)}$ is given by the set
        \begin{align*}
        \mathcal{S}^{\SL(V)} &\coloneqq \bigcup_{\substack{\lambda : \\ \ell(\lambda) \leq n}} \left( \bigcup_{r \geq 0} \SSYT(\lambda + r^n, p) \times \SSYT(\lambda,q) \right) \cup \left( \bigcup_{s > 0} \SSYT(\lambda, p) \times \SSYT(\lambda + s^n,q) \right) \\
        &= \hspace{2ex}\bigcup_{\mathclap{(T,U) \in \mathcal{S}^{\GL(V)}}} \hspace{3ex} \Big\{(A_1 \cdots A_r T, \: U) : r \geq 0\Big\} \cup \Big\{ (T, \: B_1 \cdots B_s U) : s > 0 \Big\},
        \end{align*}
        where we identify bitableaux with elements of $\C[V^{*p} \oplus V^q]^{\SL(V)}$ as follows:
        \begin{align*}
        (A_1 \cdots A_r T, \: U) &= {\det}^*(A_1) \cdots {\det}^*(A_r) \prod_{\ell=1}^{\lambda_1} {\rm m}(T_\ell, U_\ell),\\
        (T, \: B_1 \cdots B_s U) &= {\det}(B_1) \; \cdots \; {\det}(B_s) \, \prod_{\ell=1}^{\lambda_1} {\rm m}(T_\ell, U_\ell).
        \end{align*}

        \item \label{subprop:standard monomials SL SO b} Let $G = \SO(V)$.
        Let the contractions $f_{ij}$ be as defined in Table~\ref{table:FT} for $\O(V)$, and let the minors ${\rm m}(I,J)$ be as defined in~\eqref{minor def}.
        For $A = \{a_1, \ldots, a_n\} \subseteq [m]$, define the function
        \begin{equation}
            \label{det SO}
            \det(A) : (v_1, \ldots, v_m) \longmapsto \det(v_{a_1}, \ldots, v_{a_n}).
        \end{equation}
        A linear basis for $\C[V^m]^{\SO(V)}$ is given by the set
        \begin{align*}
        \mathcal{S}^{\SO(V)} &\coloneqq \bigcup_{\mathclap{\substack{\lambda: \\ \ell(\lambda) \leq n, \\ \textup{even rows}}}} \Big(\SSYT(\lambda, m) \cup \SSYT(\lambda + 1^n, m) \Big) \\
        &= \hspace{.5ex} \bigcup_{\mathclap{T \in \mathcal{S}^{\O(V)}}} \hspace{2ex} \{T\} \cup \{AT : |A| = n \text{ and }A \leq T_1 \},
        \end{align*}
        where we identify tableaux with elements of $\C[V^m]^{\SO(V)}$ as follows:
        \begin{align*}
        T &= \prod_{\ell=1}^{\lambda_1 / 2} {\rm m}(T_{2\ell-1}, T_{2\ell}),\\
        AT & = 
        \det(A) \prod_{\ell=1}^{\lambda_1 / 2} {\rm m}(T_{2\ell-1}, T_{2\ell}).
        \end{align*}
    \end{enumerate}
    For both $G = \SL(V)$ and $\SO(V)$, a linear basis for $\C[W]^G_{\mathbf{d}}$ is given by the subset
    \[
        \mathcal{S}^{G}_{\mathbf{d}} \coloneqq \{ \textup{(bi)tableaux in $\mathcal{S}^{G}$ with weight $\mathbf{d}$}\}.
    \]
\end{prop}

\begin{proof}
    These standard monomial bases are given in~\cite{Lakshmibai}, as follows.
    For $G = \SL(V)$, the standard monomials are defined in Definition 11.3.0.1 (see also the more explicit list given in~\cite{LinshawSong} immediately following Theorem 1.1), and are shown to furnish a basis for $\C[W]^G$ in Theorem 11.3.5.3.
    For $G = \SO(V)$, the standard monomials are defined in Definition 12.2.1.1, and are shown to furnish a basis for $\C[W]^G$ in Theorem 12.3.3.2.\footnote{Our notation in this paper aligns with the notation in~\cite{Lakshmibai} as follows (see especially pages 141 and 167).
    When $G = \SL(V)$, we use $A$ and $B$ to denote an $n$-element subset of $[p]$ and of $[q]$, respectively; in~\cite{Lakshmibai}, however, the authors use $J$ and $I$, respectively.
    Therefore, where we write $\det^*(A)$ and $\det(B)$, the authors of~\cite{Lakshmibai} write $\xi(J)$ and $u(I)$, respectively.
    When $G = \SO(V)$, we use $A$ to denote an $n$-element subset of $[m]$, whereas the authors of~\cite{Lakshmibai} use $I$;
    therefore, where we write $\det(A)$, the authors of~\cite{Lakshmibai} write $u(I)$.}
    Note that although Definition 12.2.1.1 in~\cite{Lakshmibai} appears to allow arbitrarily many $\det(A)$ factors, it follows from Lemma 12.2.0.1 (i.e., $\det(A) \det(A') = {\rm m}(A,A')$) that every standard monomial has a unique expression containing at most one $\det(A)$ factor.
    It is straightforward to observe that the partial order given in the aforementioned definitions in~\cite{Lakshmibai} is equivalent to the tableau order~\eqref{tableau order}, thus allowing us to view $\mathcal{S}^{G}$ as the set of (bi)tableaux we define in Proposition~\ref{prop:standard monomials SL SO}.
    
    To verify that the subset $\mathcal{S}^{G}_{\mathbf{d}}$ is a linear basis for the component $\C[W]^G_{\mathbf{d}}$, we first recall the proof of this fact for $\GL(V)$ and $\O(V)$ in Proposition~\ref{prop:standard monomial basis}, where we determined the multidegrees of the generators $f_{ij}$.
    We now do the same for the det factors: for $\SL(V)$, the function $\det^*(A)$ is homogeneous of multidegree $(\sum_{a \in A} \mathbf{e}_{a}, \mathbf{0})$ and the function $\det(B)$ is homogeneous of multidegree $(\mathbf{0}, \sum_{b \in B} \mathbf{e}_{b})$.
    For $\O(V)$, the function $\det(A)$ is homogeneous of multidegree $\sum_{a \in A} \mathbf{e}_{a}$.
    Therefore the weight of a (bi)tableau in $\mathcal{S}^G$ equals the multidegree of its corresponding function in $\C[W]^G$.
\end{proof}

\section{RSK correspondences and ordinary monomials}
\label{sec:RSK ordinary monomials}

In this section, we use RSK-type correspondences to obtain another linear basis for $\C[W]^G_{\mathbf{d}}$ from the basis $\mathcal{S}^G_{\mathbf{d}}$ of standard monomials constructed in Section~\ref{sec:CIT standard monomials}.
We will denote this new basis by $\mathcal{B}^G_{\mathbf{d}}$, and we call its elements \emph{ordinary monomials}, in order to emphasize that they are true monomials in the generators $f_{ij}$ and $\det^*(A)$ and $\det(B)$.

\subsection*{RSK correspondences}

The Robinson--Schensted--Knuth (RSK) correspondence is well known for its remarkable properties and its ubiquity in algebraic combinatorics.
In Proposition~\ref{prop:RSK} below, we recall three variations (corresponding to the three groups $\GL(V)$, $\O(V)$, and $\Sp(V)$).
The first, which we call $\RSK_{\rm A}$, is the correspondence originally defined by Knuth~\cite{Knuth}*{\S3}; the second, which we call $\RSK_{\rm B}$, was also introduced by Knuth~\cite{Knuth}*{\S5}, and is described in \cite{Burge}*{p.~22},~\cite{HerzogTrung}*{\S1}, and~\cite{Conca}*{Thm.~1.8}; the third, which we call $\RSK_{\rm C}$, is described by Knuth~\cite{Knuth}*{Thm.~4}, as well as~\cite{Burge}*{\S2} and~\cite{HerzogTrung}*{p.~25}.

Referring the reader to the references above for the details in each variant (each of which consists of an ``insertion'' algorithm and its inverse ``deletion'' algorithm), we simply give the domain and codomain in Proposition~\ref{prop:RSK} below.
Each RSK variant is a bijection between $\mathcal{S}^G$ and a set $\mathcal{M}^G$ consisting of nonnegative integer matrices.
We write ${\rm SM}_m(\mathbb{N})$ to denote the set of $m \times m$ symmetric matrices over $\mathbb{N}$.
Given a matrix $M$, we define its \emph{support} to be ${\rm supp}(M) \coloneqq \{(i,j) : m_{ij} \neq 0 \}$.
Adopting the language of contingency tables, we define the \emph{margins} of a matrix $M \in {\rm M}_{p,q}(\mathbb{N})$ to be the ordered pair $((r_1, \ldots, r_p), (c_1, \ldots, c_q))$, where $r_i$ denotes the sum of entries in row $i$ of $M$, and $c_j$ denotes the sum of entries in column $j$ of $M$.
For $M \in {\rm SM}_m(\mathbb{N})$, the row and column sums are the same, so in this case the margins of $M$ consist of the single vector $(r_1, \ldots, r_m)$.

In order to clearly describe the codomain $\mathcal{M}^G$ of each RSK variant, we let $P$ denote the set of ordered pairs $(i,j)$ indexing the generators $f_{ij}$, as given in the top row of Table~\ref{table:FT}.
We make $P$ a poset by equipping it with the partial order $\preceq$ given below in Proposition~\ref{prop:RSK}.
Recall that a subset of $P$ is said to be a \emph{chain} if its elements are pairwise comparable with respect to $\preceq$, and is said to be an \emph{antichain} if no two of its elements are comparable.
The \emph{height} (resp., \emph{width}) of a subset $S \subseteq P$ is the cardinality of the largest chain (resp., antichain) contained in $S$.

\begin{prop}
    \label{prop:RSK}
    Let $G = \GL(V)$, $\O(V)$, or $\Sp(V)$.
    Let $P$ be the following poset:
    
    \begin{center} 
\begin{tblr}{colspec={|Q[m,c]|Q[m,c]|Q[m,c]|Q[m,c]|},stretch=1.5}

\hline

$G$ & $\GL(V)$ & $\O(V)$ & $\Sp(V)$ \\ \hline[2pt]

$P = \{(i,j): \ldots \}$ & $1 \leq i \leq p, \: 1 \leq j \leq q$ & $1 \leq i \leq j \leq m$ & $1 \leq i < j \leq m$ \\ \hline

$(i,j) \preceq (i',j')$ & $i \leq i'$ and $j \leq j'$
&
$i \leq i'$ and $j \geq j'$ & $i \leq i'$ and $j \leq j'$ \\ \hline
\end{tblr}
\end{center}

    \noindent We have the following bijections, where $\mathcal{S}^{G}$ is the set defined in Proposition~\ref{prop:standard monomial basis}:
    \[
    \begin{array}{lrl}
        \RSK_{\rm A} : & \mathcal{S}^{\GL(V)} \longrightarrow & \mathcal{M}^{\GL(V)} \coloneqq \{M \in {\rm M}_{p,q}(\mathbb{N}) : \textup{${\rm supp}(M) \subseteq P$ has width $\leq n$} \}, \\
         \RSK_{\rm B} : & \mathcal{S}^{\O(V)} \longrightarrow & \mathcal{M}^{\O(V)} \coloneqq \{M \in {\rm SM}_{m}(\mathbb{N}) : \textup{$m_{ii}$ is even, ${\rm supp}(M) \cap P$ has width $\leq n$} \}, \\
        \RSK_{\rm C} : &\mathcal{S}^{\Sp(V)} \longrightarrow & \mathcal{M}^{\Sp(V)} \coloneqq \{M \in {\rm SM}_{m}(\mathbb{N}) : \textup{$m_{ii} = 0$, ${\rm supp}(M) \cap P$ has width $\leq n$} \}.
    \end{array}
    \]
    In each type, the correspondence restricts to a bijection
    \[
    \RSK_{\bullet} : \mathcal{S}^{G}_{\mathbf{d}} \longrightarrow \mathcal{M}^G_{\mathbf{d}} \coloneqq\{M \in \mathcal{M}^G : \text{$M$ has margins $\mathbf{d}$}\}.
    \]
\end{prop}

\begin{proof}
    Let $\lambda$ be the shape of the (bi)tableau input from $\mathcal{S}^{G}$, and let $M \in \mathcal{M}^G$ be the matrix that is output by $\RSK_{\bullet}$.
    To verify the width conditions on the sets $\mathcal{M}^G$, in light of the conditions on $\ell(\lambda)$ in Proposition~\ref{prop:standard monomial basis}, it suffices to establish the following:
    \begin{equation}
        \label{RSK width height}
        \begin{array}{c|rl}
           \RSK_{\rm A}  & \ell(\lambda) \hspace{-1ex} & = \text{width of } {\rm supp}(M) \subseteq P   \\ \hline
            
            \RSK_{\rm B}  & \ell(\lambda) \hspace{-1ex}& = \text{width of } {\rm supp}(M) \cap P  \\ \hline

            \RSK_{\rm C}  & \ell(\lambda)/2 \hspace{-1ex} & = \text{width of } {\rm supp}(M) \cap P
        \end{array}
    \end{equation}
    
    For $\RSK_{\rm A}$, the result~\eqref{RSK width height} 
    is given by Knuth~\cite{Knuth}, at the bottom of page 724 (using the phrase ``longest strictly decreasing subsequence'' in place of our ``antichain in $P$'').
    Note that in the present paper, where $\RSK_{\rm A}$ maps $(T,U) \mapsto M$, we are viewing $T$ as the ``recording tableau'' and $U$ as the ``insertion tableau'' (in the standard sense of~\cite{Fulton}*{Ch.~4}), so that entries of $T$ (resp., $U$) correspond to indices of rows (resp., columns) in $M$.
    
    For $\RSK_{\rm B}$, we first remark that in all of the references cited above, this variant operates on ``dual tableaux,'' which are obtained by transposing semistandard tableaux.
    Hence in order to align the results in this paper with those in the literature, it is necessary to interchange the words ``row'' and ``column.''
    With this said, the width result in~\eqref{RSK width height} is Lemma 1.1 in Herzog--Trung~\cite{HerzogTrung}.
    
    For $\RSK_{\rm C}$, which is simply the restriction of $\RSK_{\rm A}$ to pairs $(T,T)$ whose shape has even columns~\cite{Knuth}*{Thm.~4}, the result~\eqref{RSK width height} follows from the same result for $\RSK_{\rm A}$.
    In particular, we have that $\ell(\lambda)$ equals the longest antichain in ${\rm supp}(M)$, which is a pattern of nonzero entries such that any two entries lie strictly southwest/northeast of each other.
    Since $M$ is symmetric with zeros on the diagonal, and since $P$ involves only the upper-triangular entries of $M$, restricting the longest antichain to $P$ cuts its length in half.

    The fact that $\RSK_\bullet (\mathcal{S}^G_{\mathbf{d}}) = \mathcal{M}^G_{\mathbf{d}}$ follows immediately from each RSK construction:
    in $\RSK_{\rm A}$, entries in $T$ and $U$ correspond to row and column indices (respectively) in $M$, and for the other two RSK variants, each entry in $T$ corresponds to a row \emph{and} a column index in the symmetric matrix $M$.    
\end{proof}

\subsection*{Bases of ordinary monomials}

Let $G = \GL(V)$, $\O(V)$, or $\Sp(V)$.
By an \emph{ordinary monomial} in $\C[W]^G$, we mean a monic monomial in the generators $f_{ij}$ (defined in Table~\ref{table:FT}).
Given an ordinary monomial~$\boldsymbol{f}$, define its \emph{support} to be
\begin{equation}
    \label{supp}
    {\rm supp}(\boldsymbol{f}) \coloneqq \{ (i,j) \in P : f_{ij} \text{ divides } \boldsymbol{f} \},
\end{equation}
where $P$ is the poset given in Proposition~\ref{prop:RSK}.
Following Sturmfels~\cite{Sturmfels}*{p.~138} and Herzog--Trung~\cite{HerzogTrung}*{p.~25}, we define
\begin{equation}
\label{width}
    \text{width of }\boldsymbol{f} \coloneqq \text{width of }{\rm supp}(\boldsymbol{f}) \text{ in $P$}.
\end{equation}

In Proposition~\ref{prop:RSK}, each matrix $M \in \mathcal{M}^G$ can be viewed as the degree matrix of a unique ordinary monomial, via the map
\begin{equation}
\label{Phi}
    \Phi(M) \coloneqq
    \begin{cases}
        \displaystyle\prod_{(i,j) \in P} f_{ij}^{m_{ij}}, & G = \GL(V) \textup{ or } \Sp(V),\\[4ex]
        \displaystyle\prod_{(i,i) \in P} f_{ii}^{m_{ii}/2} \prod_{\substack{(i,j) \in P:\\ i < j}} f_{ij}^{m_{ij}}, & G = \O(V).
    \end{cases}
\end{equation}
Conversely, any ordinary monomial $\boldsymbol{f}$ can be viewed as a unique degree matrix $\Phi^{-1}(\boldsymbol{f})$.
By composing with the RSK correspondences, we obtain an injective map $\Phi \circ \RSK_\bullet$ from $ \mathcal{S}^G$ into the set of ordinary monomials in $\C[W]^G$.
Moreover, recalling the multidegree of each generator $f_{ij}$ from the proof of Proposition~\ref{prop:standard monomial basis}, it is clear from~\eqref{Phi} that the margins of a matrix $M$ give the multidegree of the ordinary monomial $\Phi(M)$.
Therefore $\Phi \circ \RSK_{\bullet}$ restricts to an injective map from $\mathcal{S}^G_{\mathbf{d}}$ into the set of ordinary monomials in the multigraded component $\C[W]^G_{\mathbf{d}}$.
In fact, the image of this map furnishes (another) linear basis for $\C[W]^G_{\mathbf{d}}$.

\begin{prop}
    \label{prop:ordinary monomials}
    Let $G = \GL(V)$, $\O(V)$, or $\Sp(V)$.
    A linear basis for $\C[W]^G_{\mathbf{d}}$ is given by the set
    \[
    \mathcal{B}^G_{\mathbf{d}} \coloneqq \Phi \circ \RSK_\bullet(\mathcal{S}^G_\mathbf{d}).
    \]
\end{prop}

\begin{proof}
    We first observe that
    \begin{equation}
    \label{claim in ordinary proof}
    \mathcal{B}^{G}_{\mathbf{d}} = \{ \textup{ordinary monomials in $\C[W]^G_{\mathbf{d}}$ having width at most $n$}\},
    \end{equation}
    where width is defined as in~\eqref{width}.
    To see this, note from~\eqref{Phi} that for $M \in \mathcal{M}^G$, we have ${\rm supp}(M) \cap P = {\rm supp}(\Phi(M))$, and therefore the width of ${\rm supp}(M) \cap P$ equals the width of the ordinary monomial $\Phi(M)$.
    Hence every element of $\mathcal{B}^G_{\mathbf{d}}$ has width at most $n$, in the sense of~\eqref{width}.
    Conversely, any ordinary monomial in $\C[W]^G_{\mathbf{d}}$ of width at most $n$ corresponds via $\Phi^{-1}$ to its degree matrix in $\mathcal{M}^G_{\mathbf{d}}$, which establishes the equality in~\eqref{claim in ordinary proof}.

    The fact that the right-hand side of~\eqref{claim in ordinary proof} furnishes a linear basis for $\C[W]^G_{\mathbf{d}}$ is a straightforward consequence of~\cite{Sturmfels}*{Cor.~7} for $\GL(V)$, and is briefly argued in~\cite{HerzogTrung}*{pp.~25--26} for $\Sp(V)$, in the context of the determinantal rings on the left-hand side of~\eqref{quotient}.
    We provide a self-contained proof below for all three groups.
    
    Recall from Proposition~\ref{prop:FFT SFT GL O Sp} that $\mathcal{R}$ is a Gr\"obner basis for $\langle \mathcal{R} \rangle$.
    From the general theory of Gr\"obner bases, a linear basis for $\C[\{x_{ij}\}] / \langle \mathcal{R} \rangle$ is given by the set of monomials (in the $x_{ij}$'s) that do not belong to the ideal ${\rm init}(\mathcal{R})$ generated by the leading monomials of elements in $\mathcal{R}$.
    
    We claim that with respect to the monomial ordering induced by the variable order in Table~\ref{table:FT}, the image (under $x_{ij} \mapsto f_{ij}$) of the leading monomial in each element of $\mathcal{R}$ is a monomial of width $n+1$, in the sense of~\eqref{width}.
    First we verify the claim for $G = \GL(V)$, where each element of $\mathcal{R}$ is a minor determined by, say, row indices $i_1 < \cdots < i_{n+1}$ and column indices $j_1 > \cdots > j_{n+1}$.
    The image of the leading monomial of this minor is the product
    $f_{i_1, j_1} \cdots f_{i_{n+1}, j_{n+1}}$,
    whose support $\{(i_1, j_1), \ldots, (i_{n+1}, j_{n+1})\}$ is an antichain in $P$ of size $n+1$, and thus has width $n+1$.
    For $G = \O(V)$, each element of $\mathcal{R}$ is an $(n+1)$-minor determined by row indices $i_1 < \cdots < i_{n+1}$ and column indices $j_1 < \cdots < j_{n+1}$, and the image of its leading monomial is the product $f_{i_1, j_1} \cdots f_{i_{n+1}, j_{n+1}}$, whose support is likewise an antichain in $P$ of size $n+1$.
    For $\Sp(V)$, each element of $\mathcal{R}$ is a $2(n+1)$-Pfaffian determined by row and column indices $i_1 < \cdots < i_{2(n+1)}$.
    The image of the leading monomial of this Pfaffian is 
    \[
    f_{i_1, i_{2n+2}} f_{i_2, i_{2n+1}} \cdots f_{i_{n+1}, i_{n+2}},
    \]
    whose support is an antichain in $P$ of size $n+1$.
    
    Conversely, by the argument above, any ordinary monomial of width $n+1$ is the image (under the map $x_{ij} \mapsto f_{ij}$) of the leading monomial of some element of $\mathcal{R}$.
    Note that multiplying a monomial by another monomial cannot decrease its width.
    It follows that an ordinary monomial $\boldsymbol{f}$ belongs to the image (under $x_{ij} \mapsto f_{ij}$) of ${\rm init}(\mathcal{R})$ if and only if the width of $\boldsymbol{f}$ is strictly greater than $n$.
    Therefore, a linear basis for $\C[W]^G$ is given by the ordinary monomials of width at most $n$.   
\end{proof}

We now turn to the groups $G = \SL(V)$ and $\SO(V)$.
By an \emph{ordinary monomial} in $\C[W]^G$, we mean a monic monomial in the $f_{ij}$'s and $\det^*(A)$'s and $\det(B)$'s, where the det functions are defined above in~\eqref{det SL} and~\eqref{det SO}.
In Definition~\ref{def: M SL SO} below, we define an analogous set $\mathcal{B}^G_{\mathbf{d}}$ consisting of certain ordinary monomials obtained from $\mathcal{S}^G_{\mathbf{d}}$ via the bijection $\Phi \circ \RSK_\bullet$ given in Proposition~\ref{prop:RSK}.

\begin{dfn}
\label{def: M SL SO}
    Let $G = \SL(V)$ or $\SO(V)$.
    Define
    \[
    \mathcal{B}^G_{\mathbf{d}} \coloneqq \Phi(\mathcal{S}^G_{\mathbf{d}}),\]
    where $\Psi$ is the bijection defined as follows:

    \begin{enumerate}[label=\textup{(\alph*)}]
    \item For $G = \SL(V)$, we define
    \begin{align*}
        \Psi(A_1 \cdots A_r T, \: U) & \coloneqq {\det}^*(A_1) \cdots {\det}^*(A_r) \cdot \Phi \circ \RSK_{\rm A}(T,U), \\
        \Psi(T, \: B_1 \cdots B_s U) & \coloneqq \: {\det}(B_1) \; \cdots \; {\det}(B_s) \, \cdot \Phi \circ \RSK_{\rm A}(T,U).
    \end{align*}

    \item For $G = \SO(V)$, we define
    \begin{align*}
        \Psi(T) & \coloneqq \Phi \circ \RSK_{\rm B}(T), \\
        \Psi(AT) & \coloneqq \det(A) \cdot \Phi \circ \RSK_{\rm B}(T).
    \end{align*}
       
\end{enumerate}

For either group, define $\mathcal{B}^G \coloneqq \bigcup_{\mathbf{d}} \mathcal{B}^G_{\mathbf{d}}$.
    
\end{dfn}

The aim of the remainder of this section is to show that for $G = \SL(V)$ or $\SO(V)$, the set $\mathcal{B}^G_{\mathbf{d}}$ in Definition~\ref{def: M SL SO} is a linear basis for $\C[W]^G_{\mathbf{d}}$; see Proposition~\ref{prop:ordinary monomial basis SL SO} at the end of the section.

\begin{lemma}\
    \label{lemma:tk uk}

    \begin{enumerate}[label=\textup{(\alph*)}]
        \item \label{sublemma:tk uk a} Let $(T,U) \in \mathcal{S}^{\GL(V)}$, where the first columns of $T$ and $U$ are given by $T_1 = \{t_1, \ldots, t_\ell\}$ and $U_1 = \{u_1, \ldots, u_\ell\}$, with the elements written in increasing order.
        For each $1 \leq k \leq \ell$, the entry $t_k$ (resp., $u_k$) is the smallest integer such that ${\rm supp}(\Phi \circ \RSK_{\rm A}(T,U))$ contains an antichain $\{(i_1, j_1), \ldots, (i_k, j_k)\}$ of the following form (respectively):
        \[
        \fbox{$\begin{aligned}
            &i_1 < \cdots < i_k = t_k,  \\
            &j_{1} > \cdots > j_{k}
        \end{aligned}$} 
        \quad \textup{or}
        \quad
        \fbox{$\begin{aligned}
            &i_1 > \cdots > i_k,  \\
            &j_{1} < \cdots < j_{k} = u_k
        \end{aligned}$}
        \]

        \item \label{sublemma:tk uk b} Let $T \in \mathcal{S}^{\O(V)}$, where the first column of $T$ is given by $T_1 = \{t_1, \ldots, t_\ell\}$, with the elements written in increasing order.
        For each $1 \leq k \leq \ell$, the entry $t_k$ is the smallest integer such that ${\rm supp}(\Phi \circ \RSK_{\rm B}(T))$ contains an antichain $\{(i_1, j_1), \ldots, (i_k, j_k)\}$ of the following form:
        \[
        \fbox{$\begin{aligned}
            &i_1 < \cdots < i_k = t_k,  \\
            &j_{1} < \cdots < j_{k}
        \end{aligned}$}
        \]
    \end{enumerate}
\end{lemma}

\begin{proof}

    Part (b) is proved by Herzog--Trung~\cite{HerzogTrung} in Lemma~1.2.
    Upon applying their argument to the $\RSK_{\rm A}$ correspondence in Knuth~\cite{Knuth}*{\S3}, part (a) follows immediately.
\end{proof}

\begin{lemma}
\label{lemma:width n+1 SL SO}
    Let $\boldsymbol{f}$ be a monic monomial in the $f_{ij}$\!'s, with ${\rm supp}(\boldsymbol{f})$ as defined in~\eqref{supp}.

    \begin{enumerate}[label=\textup{(\alph*)}]
        \item Let $G = \SL(V)$.
        Suppose that $A_1 \leq \cdots \leq A_r$ and $B_1 \leq \cdots \leq B_s$ in the tableau order~\eqref{tableau order}.
        Let $A_r = \{a_1, \ldots, a_n\}$ and $B_s = \{b_1, \ldots, b_n\}$; however, if $r=0$ or $s=0$, then put $A_r = \varnothing$ or $B_s = \varnothing$, respectively.
        Set $a_{n+1} = b_{n+1} = \infty$.
        An ordinary monomial
        \[
        {\det}^*(A_1) \cdots {\det}^*(A_r) \cdot \boldsymbol{f} \qquad \text{or} \qquad \det(B_1) \cdots \det(B_s) \cdot \boldsymbol{f}
        \]
        belongs to $\mathcal{B}^{\SL(V)}$ if and only if there does not exist a pattern
        \begin{equation}
        \label{bad pattern SL}
        \fbox{$\begin{aligned}
            &i_1 < \cdots < i_k < a_{k},  \\
            &j_{1} > \cdots > j_{k}
        \end{aligned}$} 
        \qquad \textup{or}
        \qquad
        \fbox{$\begin{aligned}
            &i_1 > \cdots > i_k,  \\
            &j_{1} < \cdots < j_{k} < b_k
        \end{aligned}$} \, ,
        \end{equation}
        respectively, where $\{(i_1, j_1), \ldots, (i_k, j_k)\} \subseteq {\rm supp}(\boldsymbol{f})$.

        \item Let $G = \SO(V)$.
        Let $A = \varnothing$ or $A = \{a_1, \ldots, a_n\}$, and set $a_{n+1} = \infty$;
        corresponding to these two choices for $A$ (respectively), an ordinary monomial
        \[
        \boldsymbol{f} \quad \textup{or} \quad {\det}(A) \cdot \boldsymbol{f}
        \]
        belongs to $\mathcal{B}^{\SO(V)}$ if and only if there does not exist a pattern
        \begin{equation}
        \label{bad pattern SO}
        \fbox{
        $
        \begin{aligned}
            & i_1 < \cdots < i_k < a_k,\\
            & j_{1} < \cdots < j_{k} 
        \end{aligned}
        $
        }
        \end{equation}
        where $\{(i_1, j_1), \ldots, (i_k, j_k)\} \subseteq {\rm supp}(\boldsymbol{f})$.
    \end{enumerate}
\end{lemma}

\begin{proof}\

(a) Let $(T,U) = (\Phi \circ \RSK_{\rm A})^{-1}(\boldsymbol{f})$, and let $T_1 = \{t_1, \ldots, t_\ell\}$ and $U_1 = \{u_1, \ldots, u_\ell\}$.
By Definition~\ref{def: M SL SO}, the monomial $\det^*(A_1) \cdots \det^*(A_r) \cdot \boldsymbol{f}$ belongs to $\mathcal{B}^{\SL(V)}$ if and only if $A_r \leq T_1$ in the tableau order~\eqref{tableau order}, if and only if $a_k \leq t_k$ for all $ 1\leq k \leq \ell \leq n$.
By Lemma~\ref{lemma:tk uk}(a), we have $a_k \leq t_k$ if and only if $a_k \leq i_k$ for every antichain $\{(i_1, j_1), \ldots, (i_k, j_k)\} \subseteq {\rm supp}(\boldsymbol{f})$ where $i_1 < \cdots < i_k$.
Moreover, we have $\ell \leq n$ if and only if there does not exist the pattern~\eqref{bad pattern SL} where $k = n+1$.
The proof is identical upon replacing the $\det^*(A_i)$ factors by $\det(B_i)$ factors.

(b) The proof is identical to part (a), this time appealing to part \ref{sublemma:tk uk b} of Lemma~\ref{lemma:tk uk}.    
\end{proof}

\begin{lemma}
    \label{lemma:Sigma for SL SO}
    The set $\mathcal{B}^G$ (defined in Definition~\ref{def: M SL SO}) consists of precisely those ordinary monomials that are not divisible by ordinary monomials $\boldsymbol{m}$ of the following form:

    \begin{enumerate}[label=\textup{(\alph*)}]
        \item \label{sublemma:Sigma a} For $G = \SL(V)$:
        \begin{enumerate}[label=\textup{(\roman*)}]
            \item $\det^*(A) \det(B)$;
            \item $\det^*(A) \det^*(A')$ or $\det(B) \det(B')$, where $A$ and $A'$ (resp., $B$ and $B'$) are not comparable in the tableau order~\eqref{tableau order};
            \item $f_{i_1,j_1} \cdots f_{i_k, j_k}$ or $\det^*(A) \cdot f_{i_1, j_1} \cdots f_{i_k, j_k}$ or $\det(B) \cdot f_{i_1, j_1} \cdots f_{i_k, j_k}$ satisfying~\eqref{bad pattern SL}.
        \end{enumerate}

        \item \label{sublemma:Sigma b} For $G = \SO(V)$:

        \begin{enumerate}[label=\textup{(\roman*)}]
            \item $\det(A) \det(A')$;
            \item $f_{i_1,j_1} \cdots f_{i_k,j_k}$ or $\det(A) \cdot f_{i_1,j_1} \cdots f_{i_k,j_k}$ satisfying~\eqref{bad pattern SO}.
        \end{enumerate}
    \end{enumerate}
\end{lemma}

\begin{proof}
    To verify part (i) for each group, observe (by Proposition~\ref{prop:standard monomials SL SO} and Definition~\ref{def: M SL SO}) that no element of $\mathcal{B}^{\SL(V)}$ contains both $\det$ and $\det^*$, and no element of $\SO(V)$ contains more than one $\det$ factor.
    To verify part (a)(ii), we further observe that since an element in $\mathcal{B}^{\SL(V)}$ arises from some pair $(A_1 \cdots A_rT, \: U)$ or $(T, \: B_1 \cdots B_s U)$ of semistandard tableaux, the $A_i$'s or $B_i$'s form a chain in the tableau order.
    Parts (a)(iii) and (b)(ii) follow immediately from Lemma~\ref{lemma:width n+1 SL SO}.
\end{proof}

\begin{rem}
    For $G = \SL(V)$, our Lemma~\ref{lemma:Sigma for SL SO} is implicit in~\cite{Jackson}*{Thm.~3.6.17}, using different language, if one views $\SL(V)$-invariants as certain $\GL(V)$-covariants.
    In particular, our pattern~\eqref{bad pattern SL} is a special case of a \emph{split}~\cite{Jackson}*{p.~77}.
    Likewise, the straightening law~\eqref{Jackson straightening} below is a special case of~\cite{Jackson}*{equation (3.8.21)}.
\end{rem}

\begin{lemma}
    \label{lemma:span}
    The set $\mathcal{B}^G$ (defined in Definition~\ref{def: M SL SO}) spans $\C[W]^G$.
\end{lemma}

\begin{proof}
    We will impose a certain monomial ordering $<$ on the set of all ordinary monomials.
    Then we will show that each ordinary monomial $\boldsymbol{m}$ listed in Lemma~\ref{lemma:Sigma for SL SO} can be written as a linear combination of ordinary monomials that are less than $\boldsymbol{m}$.
    It follows that \emph{any} ordinary monomial outside $\mathcal{B}^G$ can be written as a linear combination of lesser ordinary monomials.
    Proceeding by induction, eventually one obtains a linear combination of monomials not divisible by any $\boldsymbol{m}$'s in Lemma~\ref{lemma:Sigma for SL SO}, which is therefore in the span of $\mathcal{B}^G$.

    \noindent (a) Let $G = \SL(V)$.
    First we impose a monomial ordering on $\C[\{\det^*(A)\}]$, $\C[\{\det(B)\}]$, and $\C[\{f_{ij}\}]$ separately:
   
   \begin{itemize}
    \item Viewing $A$ and $A'$ as tableau columns of length $n$, we declare that as variables, $\det^*(A) < \det^*(A')$ if the bottommost entry in $A$ is less than the bottommost entry in $A'$ (breaking ties by moving upward in the columns).
    In particular, if $A \leq A'$ in the tableau order~\eqref{tableau order}, then necessarily $\det^*(A) < \det^*(A')$.
    To obtain a monomial ordering on $\C[\{\det^*(A)\}]$, we impose the degree reverse lexicographic order induced from this variable order:
    that is, supposing $\boldsymbol{m}$ and $\boldsymbol{n}$ are monomials in $\C[\{\det^*(A)\}]$, one has $\boldsymbol{m} < \boldsymbol{n}$ if the degree of $\boldsymbol{m}$ is less than the degree of $\boldsymbol{n}$, or (if the degrees are equal) if $\boldsymbol{m}$ contains a larger power of the smallest variable (breaking ties in increasing variable order).
    
    \item The monomial ordering on $\C[\{\det(B)\}]$ is defined identically as that for $\C[\{\det^*(A)\}]$.
    
    \item The monomial ordering on $\C[\{f_{ij}\}]$ is the lexicographic order induced by the variable order in Table~\ref{table:FT}, under the map $x_{ij} \mapsto f_{ij}$.

\end{itemize}
To extend these three separate monomial orderings to a monomial ordering $<$ on the set of all ordinary monomials, we first compare the $\det^*(A)$ factors, then (to break ties) the $\det(B)$ factors, then (to break ties) the $f_{ij}$ factors.

Now we will show that each monomial $\boldsymbol{m}$ listed in Lemma~\ref{lemma:Sigma for SL SO}\ref{sublemma:Sigma a} can be written as a linear combination of lesser monomials.
We do this via the following straightening laws:

\begin{enumerate}[label=(\roman*)]
    \item Let $\boldsymbol{m} = \det^*(A) \det(B)$.
    By Theorem 11.3.1.1(i) in~\cite{Lakshmibai}, we have $\boldsymbol{m} = \det^*(A) \det(B) = {\rm m}(A,B)$.
    Since ${\rm m}(A,B)$ is a linear combination of terms in the $f_{ij}$'s only, it has degree 0 in the $\det^*$ factors, and is therefore less than $\boldsymbol{m}$ (with respect to the monomial ordering $<$).

    \item Let $\boldsymbol{m} = \det(B) \det(B')$, where $B$ and $B'$ are not comparable in the tableau order.
    By Theorem 11.3.1.1(ii) in~\cite{Lakshmibai}, $\boldsymbol{m}$ can be written as a linear combination of terms $\det(B_i)\det(B'_i)$, where (in the tableau order) we have $B_i \leq B$ and $B_i \leq B'$ and $B \leq B'_i$ and $B' \leq B'_i$.
    This implies that $\det(B_i) \leq \det(B)$ and $\det(B_i) \leq \det(B')$ with respect to the variable order.
    Therefore, with respect to the monomial ordering~$<$, we have that each $\det(B_i) \det(B'_i) \leq \det(B) \det(B')$.
    Thus $\boldsymbol{m} = \det(B)\det(B')$ can be written as a linear combination of lesser monomials.
    The proof for products $\det^*(A)\det^*(A')$ is identical.

    \item If $\boldsymbol{m} = f_{i_1,j_1} \cdots f_{i_k,j_k}$ satisfies~\eqref{bad pattern SL}, then we have $k = n+1$.
    Thus $\boldsymbol{m}$ is the largest monomial in an $(n+1) \times (n+1)$ minor in $[f_{ij}]$, which is identically 0 by Proposition~\ref{prop:FFT SFT GL O Sp}.
    Thus $\boldsymbol{m}$ can be written as a linear combination of the other terms of the minor, which are less than $\boldsymbol{m}$ in the monomial ordering $<$.
    
    Suppose that $\boldsymbol{m} = \det(B) \cdot f_{i_1,j_1} \cdots f_{i_k,j_k}$ satisfies the pattern on the right-hand side of~\eqref{bad pattern SL}.
    Put $B = \{b_1, \ldots, b_n\}$ and $I = \{i_k, \ldots, i_1\}$ and $J = \{j_1, \ldots, j_k\}$, with elements written in increasing order.
    Consider the set partition $B = C \sqcup D$, where $C = \{b_1, \ldots, b_{k-1}\}$ and $D = \{b_k, \ldots, b_n\}$.
    Since we are assuming that $\boldsymbol{m}$ satisfies~\eqref{bad pattern SL}, we have
    \begin{equation}
        \label{IB}
        \underbrace{j_1 < \cdots < j_k}_{J} < \underbrace{b_k < \cdots < b_n}_D,
    \end{equation}
    so that $| J \sqcup D | = n+1$.
    Now consider all possible set partitions $J \sqcup D = J' \sqcup D'$ such that $|J'| = |J|$ and $|D'| = |D|$.
    Let $J'D'$ denote the concatenation of $J'$ and $D'$ such that the elements of $J'$ and $D'$ are (separately) written in increasing order; note that $J'D'$ is a sequence of $n+1$ many distinct elements.
    Let ${\rm sgn}(J'D')$ denote the signature of the permutation required to restore the elements of $J'D'$ to increasing order.
    Let $CD'$ denote the concatenation of $C$ and $D'$ in the same way; note that $CD'$ is a sequence of $n$ (possibly repeating) elements.
    We claim that
    \begin{equation}
    \label{Jackson straightening}
    \sum_{\mathclap{\substack{ J', D': \\
    |J'| = |J|,\\
    |D'| = |D|,\\
    J' \sqcup D' = J \sqcup D}}}{\rm sgn}(J'D') \cdot {\det}(CD') \cdot {\rm m}(I,J') = 0.
    \end{equation}
    To prove the claim, since $\dim V = n$, it suffices to show that the multilinear function on the left-hand side of~\eqref{Jackson straightening} is alternating in the $n+1$ many vectors $v_{j_1}, \ldots, v_{j_k}, v_{b_k}, \ldots, v_{b_n} \in V$.
    To this end, choose any two of these vectors and consider the effect on the summand indexed by $J',D'$ in~\eqref{Jackson straightening} achieved by interchanging these two vectors.
    If both vectors are indexed by elements of $J'$, then interchanging them has no effect on $\det(CD')$ but reverses the sign of the minor ${\rm m}(I,J')$; the opposite is true if both vectors are indexed by elements of $D'$.
    If one vector is indexed by an element $j \in J'$ and the other by an element $d \in D'$, then interchanging them results in the term
    \begin{align*}
     & \; {\rm sgn}(J' D') \cdot \det(C \tilde{D}'') \cdot {\rm m}(I, \tilde{J}'') \\
    = \; & \underbrace{{\rm sgn}(J' D') \cdot {\rm sgn}(\tilde{J}'') \cdot {\rm sgn}(\tilde{D}'')}_{- {\rm sgn}(J''D'')} \cdot \det(C D'') \cdot {\rm m}(I, J''),
    \end{align*}
    where $\tilde{J}''$ and $\tilde{D}''$ are the (not necessarily increasing) sequences obtained from $J$ and $D$ by exchanging the elements $j$ and $d$, and where $J''$ and $D''$ are obtained by sorting these into increasing order.
    (See the detailed argument in the proof of Lemma~3.8.18 and equation (3.8.21) in~\cite{Jackson}, which carefully accounts for the signatures above.)
    Therefore, interchanging any two of the $n+1$ many vectors indexed by $J \sqcup D$ changes the sign of every term on the left-hand side of~\eqref{Jackson straightening}.
    It follows that this left-hand side is alternating in $n+1$ vectors and therefore vanishes, which proves the claim~\eqref{Jackson straightening}.

    Finally, we observe that the largest monomial in~\eqref{Jackson straightening} is $\boldsymbol{m} = \det(B) \cdot f_{i_1, j_1} \cdots f_{i_k, j_k}$, corresponding to the summand where $J'= J$ and $D' = D$.
    Indeed, by~\eqref{IB}, with respect to the monomial ordering $<$ defined above, we have $\det(CD') < \det(CD) = \det(B)$ for all $D' \neq D$; then as observed in the proof of Proposition~\ref{prop:ordinary monomials}, the largest monomial in ${\rm m}(I,J)$ is the antidiagonal product $f_{i_1, j_1} \cdots f_{i_k,j_k}$.
    Hence~\eqref{Jackson straightening} exhibits a way to write $\boldsymbol{m}$ as a linear combination of lesser monomials.

    The proof for $\boldsymbol{m} = \det^*(A) \cdot f_{i_1, j_1} \cdots f_{i_k, j_k}$ satisfying the left-hand side of~\eqref{bad pattern SL} is identical.
    
\end{enumerate}

\noindent (b) Let $G = \SO(V)$.
The proof is a simpler version of the same argument in part (a) above.
We impose the same monomial ordering $<$ on the set of ordinary monomials as we did in part (a), the only difference being that the variable ordering on the $f_{ij}$'s is that given in Table~\ref{table:FT} for the group $\O(V)$.
Then we use the following straightening laws to rewrite each monomial $\boldsymbol{m}$ in Lemma~\ref{lemma:Sigma for SL SO} as a linear combination of lesser monomials:

\begin{enumerate}[label=(\roman*)]
    \item Let $\boldsymbol{m} = \det(A) \det(A')$.
    By Lemma 12.2.0.1 in~\cite{Lakshmibai}, we have $\boldsymbol{m} = \det(A) \det(A') = {\rm m}(A,A')$.
    Since ${\rm m}(A,A')$ has degree 0 in the $\det$ variables, every term is less than $\boldsymbol{m}$ with respect to the monomial ordering $<$.

    \item Just as for part (a), we can dispense with the case where $\boldsymbol{m} = f_{i_1, j_1} \cdots f_{i_k, j_k}$ satisfies~\eqref{bad pattern SO}, since then $\boldsymbol{m}$ is the leading term of an $(n+1) \times (n+1)$ minor of $[f_{ij}]$.
    Therefore, suppose that $\boldsymbol{m} = \det(A) \cdot f_{i_1, j_1} \cdots f_{i_k, j_k}$ satisfies~\eqref{bad pattern SO}.
    Just as above, we put $I = \{i_1, \ldots, i_k\}$, $J = \{j_1, \ldots, j_k\}$, $A = \{a_1, \ldots, a_n\}$, and the set partition $A = C \sqcup D$ where $C = \{a_1, \ldots, a_{k-1}\}$ and $D = \{a_k, \ldots, a_n\}$.
    By considering the analogue of~\eqref{Jackson straightening} given by
    \[
    \sum_{\mathclap{\substack{ I', D': \\
    |I'| = |I|,\\
    |D'| = |D|,\\
    I' \sqcup D' = I \sqcup D}}}{\rm sgn}(I'D') \cdot {\det}(CD') \cdot {\rm m}(I',J) = 0,
    \]
    we observe that $\boldsymbol{m}$ is the largest monomial in the sum, thus obtaining a straightening law in the same way as in part (a)(iii) above.
    \qedhere
\end{enumerate}
\end{proof}

\begin{prop}
    \label{prop:ordinary monomial basis SL SO}

    Let $G = \SL(V)$ or $\SO(V)$, with $\mathcal{B}^G_{\mathbf{d}}$ as in Definition~\ref{def: M SL SO}.
    The set $\mathcal{B}^G_{\mathbf{d}}$ is a linear basis for $\C[W]^G_{\mathbf{d}}$.
\end{prop}

\begin{proof}
By Proposition~\ref{prop:standard monomials SL SO}, the set $\mathcal{S}^G_{\mathbf{d}}$ is a linear basis for $\C[W]^G_{\mathbf{d}}$.
Because the map $\Psi: \mathcal{S}^G_{\mathbf{d}} \rightarrow \mathcal{B}^G_{\mathbf{d}}$ (defined in Definition~\ref{def: M SL SO}) is a bijection, we have $\# \mathcal{B}^{G}_{\mathbf{d}} = \#\mathcal{S}^G_\mathbf{d} = \dim \C[W]^{G}_{\mathbf{d}}$.
Since $\mathcal{B}^G$ spans $\C[W]^G$ by Lemma~\ref{lemma:span}, and since there are exactly enough elements in each $\mathcal{B}^G_{\mathbf{d}}$ to match the dimension of the multigraded component $\C[W]^G_{\mathbf{d}}$, the result follows.
\end{proof}

\section{Linear bases for polynomial invariants via arc diagrams}
\label{sec:polynomial bases}

In this section, we use arc diagrams with degree sequence $\mathbf{d}$ to give a combinatorial interpretation of the ordinary monomials in the basis $\mathcal{B}^{G}_{\mathbf{d}}$ for $\C[W]^G_{\mathbf{d}}$.
The arcs in each diagram represent the generators $f_{ij}$ given in Table~\ref{table:FT}, and certain hyperedges represent the factors $\det^*(A)$ and $\det(B)$.
The corresponding monomial in $\mathcal{B}^G_{\mathbf{d}}$ is recovered simply by taking the product of the arcs and hyperedges in a diagram.

An \emph{arc diagram} is a multigraph with self-loops on a vertex set $[m] \coloneqq \{1, \ldots, m\}$, where the vertices are drawn from left to right in a horizontal row, and all arcs are depicted above the vertices.
As usual, the \emph{degree} of a vertex is the number of edges incident to it (where a self-loop contributes 2 to the degree).
We write the \emph{degree sequence} as an $m$-tuple $\mathbf{d} = (d_1, \ldots, d_m) \in \mathbb{N}^m$, where $d_i$ is the degree of the vertex $i$.
A \emph{$(p,q)$-bipartite} arc diagram has the vertex set $\{1^*, \ldots, p^*, 1, \ldots, q\}$, and every arc takes the form $\{i^*, j\}$ for some $i \in [p]$ and $j \in [q]$.
We depict such a diagram with a vertical line separating the $p$ starred vertices from the $q$ unstarred vertices.
Its degree sequence is thus written as an ordered pair $(\mathbf{d}_p, \mathbf{d}_q) \in \mathbb{N}^p \times \mathbb{N}^q$.
Two arcs are said to be a \emph{strict} (resp., \emph{weak}) \emph{nesting} if one arc lies strictly (resp., weakly) inside the other; here, ``strict'' and ``weak'' refer to whether or not the arcs are allowed to share vertices.
A strict \emph{$k$-nesting} is a set of $k$ many arcs in which each pair is a strict nesting.
Vacuously, every arc is a $1$-nesting.
See Figure~\ref{fig:poly arc GL O Sp} for examples of the arc diagrams described in the following theorem.

\begin{theorem} Let $G = \GL(V)$, $\O(V)$, or $\Sp(V)$.
Recall the generators $f_{ij}$ given in Table~\ref{table:FT}.
\label{thm:poly invariants GL O Sp}

    \begin{enumerate}[label=\textup{(\alph*)}]
        \item \label{GL in poly thm} Let $\dim V = n$.
        A linear basis for $\C[V^{*p} \oplus V^q]^{\GL(V)}_{(\mathbf{d}_p, \mathbf{d}_q)}$ is given by the set of $(p,q)$-bipartite arc diagrams with degree sequence $(\mathbf{d}_p, \mathbf{d}_q)$ that do not contain a strict $(n+1)$-nesting.
        An arc diagram is viewed as the product of its arcs, where each arc $\{i^*, j\}$ represents $f_{ij}$.

        \item \label{O in poly thm} Let $\dim V = n$.
        A linear basis for $\C[V^m]^{\O(V)}_{\mathbf{d}}$ is given by the set of arc diagrams with degree sequence $\mathbf{d}$, such that every collection of $n+1$ many arcs contains a weakly nested pair.
        (Equivalently, the arcs can be decomposed into at most $n$ many disjoint weak nestings.)
        An arc diagram is viewed as the product of its arcs, where each arc $\{i, j\}$ represents $f_{ij}$.

        \item \label{Sp in poly thm} Let $\dim V = 2n$.
        A linear basis for $\C[V^m]^{\Sp(V)}_{\mathbf{d}}$ is given by the set of arc diagrams without self-loops and with degree sequence $\mathbf{d}$ that do not contain a strict $(n+1)$-nesting.
        An arc diagram is viewed as the product of its arcs, where each arc $\{i, j\}$ represents $f_{ij}$.
    \end{enumerate}
\end{theorem}

\begin{figure}[t]
     \centering
     \begin{subfigure}{0.3\textwidth}
         \centering
         \scalebox{0.7}{
\begin{tikzpicture}[-,auto,node distance=.75cm,
  very thick,plain node/.style={minimum size=0.6cm,inner sep=0pt, circle,draw,font=\sffamily\bfseries, fill=white}, bend left = 60]

\node[plain node] (1*) {1*};
\node[plain node] (2*) [right of=1*] {2*};
\node[plain node] (3*) [right of=2*] {3*};
\node[plain node] (4*) [right of=3*] {4*};
\node[plain node] (5*) [right of=4*] {5*};
\node (divider) [right of=5*] {$\smash{\Bigg|}$};
\node[plain node] (1) [right of=divider] {1};
\node[plain node] (2) [right of=1] {2};
\node[plain node] (3) [right of=2] {3};
\node[plain node] (4) [right of=3] {4};

\draw [bend left=70] (1*) to (3);
\draw [bend left=80] (1*) to (3);
\draw [red, ultra thick] (1*) to (3);
\draw (2*) to (4);
\draw [red, ultra thick] (3*) to (2);
\draw [bend left=45] (3*) to (1);
\draw [bend left=55] (3*) to (1);
\draw [red, ultra thick, bend left=45] (5*) to (1);

\end{tikzpicture}
}
         \caption{$G = \GL(V)$, with $p=5$ and $q=4$.
         The degree sequence is $\mathbf{d} = ((3,1,3,0,1),(3,1,3,1))$.
         The largest strict nesting is the 3-nesting given by the arcs highlighted in red.
         Therefore this arc diagram appears as a basis element if and only if $\dim V \geq 3$.}
         \label{subfig:poly_arc_GL}
     \end{subfigure}
     \hfill
     \begin{subfigure}{0.3\textwidth}
         \centering
         \scalebox{0.7}{
\begin{tikzpicture}[-,auto,node distance=.75cm,
  very thick, inner sep=0 pt, plain node/.style={circle,draw,font=\sffamily\bfseries,fill=white,minimum size = 0.6cm}]
\tikzstyle{every loop}=[-, distance = 10]

\node[plain node] (1) {1};
\node[plain node] (2) [right of=1] {2};
\node[plain node] (3) [right of=2] {3};
\node[plain node] (4) [right of=3] {4};
\node[plain node] (5) [right of=4] {5};
\node[plain node] (6) [right of=5] {6};

%%% Non-loops go here.  We can change the 60 degrees if something else looks better:

\draw [red, ultra thick, bend left = 60] (1) to (3);
\draw [red, ultra thick, bend left = 80] (2) to (4);
\draw [red, ultra thick, bend left = 60] (4) to (5);
\draw [bend left = 65] (1) to (5);
\draw [bend left = 75] (1) to (5);
\draw [bend left = 65] (1) to (5);
\draw [bend left = 85] (1) to (5);
\draw [bend left = 50] (3) to (6);

%%% Loops go here:

\path (3) edge [loop above] node {} (3);
\path (6) edge [red, ultra thick, loop above] node {} (6);
\path (6) edge [loop above, in=55, out=125,distance=25] node {} (6);

\node at (0,-.9) {};

\end{tikzpicture}
}
         \caption{$G = \O(V)$, with ${m = 6}$.
         The degree sequence is $\mathbf{d} = (4, 1, 4, 2, 4, 5)$.
         The largest subset of arcs without a weak nesting consists of the four arcs highlighted in red; therefore this arc diagram appears as a basis element if and only if $\dim V \geq 4$.}
         \label{subfig:poly_arc_O}
     \end{subfigure}
     \hfill
     \begin{subfigure}{0.3\textwidth}
         \centering
         \scalebox{0.7}{
\begin{tikzpicture}[-,auto,node distance=.75cm,
  very thick, inner sep=0 pt, plain node/.style={circle,draw,font=\sffamily\bfseries,fill=white,minimum size = 0.6cm}]
\tikzstyle{every loop}=[-, distance = 10]

\node[plain node] (1) {1};
\node[plain node] (2) [right of=1] {2};
\node[plain node] (3) [right of=2] {3};
\node[plain node] (4) [right of=3] {4};
\node[plain node] (5) [right of=4] {5};
\node[plain node] (6) [right of=5] {6};

%%% Non-loops go here.  We can change the 60 degrees if something else looks better:

\draw [bend left = 55] (1) to (3);
\draw [red, ultra thick, bend left = 60] (3) to (4);
\draw [red, ultra thick, bend left = 60] (1) to (5);
\draw [bend left = 70] (1) to (5);
\draw [bend left = 80] (1) to (5);
\draw [blue, ultra thick, bend left = 70] (3) to (6);
\draw [blue, ultra thick, bend left = 60] (4) to (5);

\node at (0,-.9) {};

\end{tikzpicture}
}
         \caption{$G = \Sp(V)$, with ${m = 6}$.
         The degree sequence is $\mathbf{d} = (4,0,3,2,4,1)$.
         The largest strict nesting has size 2 (either the red arcs or the blue arcs).
         Therefore this arc diagram appears as a basis element if and only if $\frac{1}{2}\dim V \geq 2$.}
         \label{subfig:poly_arc_Sp}
     \end{subfigure}
        \caption{\label{fig:poly arc GL O Sp} Examples of the arc diagrams specified in Theorem~\ref{thm:poly invariants GL O Sp}, when $G = \GL(V)$, $\O(V)$, or $\Sp(V)$.
        Each arc diagram represents an ordinary monomial in $\mathcal{B}^G_{\mathbf{d}}$, namely the product of the contractions $f_{ij}$ corresponding to its arcs.}
    \end{figure}

\begin{proof}
    By Proposition~\ref{prop:ordinary monomials}, a linear basis for $\C[W]^G_{\mathbf{d}}$ is given by the set $\mathcal{B}^{G}_{\mathbf{d}}$;
    by~\eqref{claim in ordinary proof}, this set consists of those ordinary monomials of multidegree $\mathbf{d}$ and of width at most $n$.
    It is clear that the degree sequence of an arc diagram equals the multidegree of its corresponding monomial, because for $\GL(V)$ each generator $f_{ij}$ has multidegree $(\mathbf{e}_i, \mathbf{e}_j)$, and for $\O(V)$ and $\Sp(V)$ each $f_{ij}$ has multidegree $\mathbf{e}_i + \mathbf{e}_j$.
    It remains to prove that the nesting conditions described above are equivalent to the width being at most $n$.
    Recall from~\eqref{width} that an ordinary monomial has width at most $n$ if and only if its support in $P$ does not contain an antichain of cardinality $n+1$.

    \begin{enumerate}[label=(\alph*)]
        \item Let $G = \GL(V)$.
        With respect to the partial order $\preceq$ on $P$ given in Proposition~\ref{prop:ordinary monomials}, two elements $(i,j)$ and $(i', j')$ are noncomparable if and only if $i < i'$ and $j > j'$, or $i > i'$ and $j < j'$.
        Equivalently, the corresponding arcs $\{i^*, j\}$ and $\{i'^*, j'\}$ form a strict nesting.
        Thus an antichain in $P$ of cardinality $n+1$ corresponds to a strict $(n+1)$-nesting in an arc diagram.

        \item Let $G = \O(V)$.
        With respect to the partial order $\preceq$ on $P$, two elements $(i,j)$ and $(i', j')$ are noncomparable if and only if $i < i'$ and $j < j'$, or $i > i'$ and $j > j'$.
        Equivalently, the corresponding arcs $\{i, j\}$ and $\{i', j'\}$ do not form a weak nesting.
        Thus an antichain in $P$ of cardinality $n+1$ corresponds to a collection of $n+1$ many arcs containing no weak nesting.

        \item Let $G = \Sp(V)$.
        With respect to the partial order $\preceq$ on $P$, two elements $(i,j)$ and $(i', j')$ are noncomparable if and only if $i < i'$ and $j > j'$, or $i > i'$ and $j < j'$.
        Equivalently, the corresponding arcs $\{i, j\}$ and $\{i', j'\}$ form a strict nesting.
        Thus an antichain in $P$ of cardinality $n+1$ corresponds to a strict $(n+1)$-nesting in an arc diagram. \qedhere
    \end{enumerate}
\end{proof}

In order to extend Theorem~\ref{thm:poly invariants GL O Sp} to the groups $\SL(V)$ and $\SO(V)$, we will allow certain hyperedges of order $n = \dim V$ (i.e., edges which connect $n$ vertices).
The degree of a vertex is still defined as the number of incident (hyper)edges.
We write hyperedges as sets $A^* = \{a^*_1, \ldots, a^*_n\}$ or $B = \{b_1, \ldots, b_n\}$, with the elements written in increasing order.

\begin{theorem}
\label{thm:poly invariants SL SO}

Let $G = \SL(V)$ or $\SO(V)$.
Let $\dim V = n$.

    \begin{enumerate}[label=\textup{(\alph*)}]
        \item \label{SL in poly thm} A linear basis for $\C[V^{*p} \oplus V^q]^{\SL(V)}_{(\mathbf{d}_p, \mathbf{d}_q)}$ is given by the set of arc diagrams described for $\GL(V)$ in Theorem~\ref{thm:poly invariants GL O Sp}\ref{GL in poly thm}, where now we allow hyperedges $A^* = \{a^*_1, \ldots, a^*_n\}$ or $B = \{b_1, \ldots, b_n\}$ in the following manner:

        \begin{itemize}
            \item All hyperedges lie on the same side of the diagram as each other (either all starred or all unstarred).
            \item The hyperedges form a chain with respect to the tableau order~\eqref{tableau order}.  
            
            \item For each $1 \leq k \leq n$, the vertex $a^*_k$ (resp., $b_k$) must not lie strictly inside (resp., outside) a strict $k$-nesting.
            (Note that it suffices to check only the maximal hyperedge.)
        \end{itemize}
        
        Each arc diagram is viewed as the product of its arcs and hyperedges, where an arc $\{i^*, j\}$ represents $f_{ij}$, a hyperedge $A^*$ represents $\det^*(A)$, and a hyperedge $B$ represents $\det(B)$, as defined in Proposition~\ref{prop:standard monomials SL SO}\ref{subprop:standard monomials SL SO a}.

        \item \label{SO in poly thm} 
        A linear basis for $\C[V^m]^{\SO(V)}_{\mathbf{d}}$ is given by the set of arc diagrams described for $\O(V)$ in Theorem~\ref{thm:poly invariants GL O Sp}\ref{O in poly thm}, where now we allow at most one hyperedge $A = \{a_1, \ldots, a_n\}$ in the following manner:

        \begin{itemize}
            \item For each $1 \leq k \leq n$, consider the arcs whose left endpoint is strictly left of vertex $a_k$.
            Every collection of $k$ of these arcs must contain a weak nesting.
            (Equivalently, these arcs can be decomposed into fewer than $k$ many disjoint weak nestings.)
        \end{itemize}

        Each diagram is viewed as the product of its arcs and its hyperedge (if there is one), where each arc $\{i,j\}$ represents $f_{ij}$, and a hyperedge $A$ represents $\det(A)$, as defined in Proposition~\ref{prop:standard monomials SL SO}\ref{subprop:standard monomials SL SO b}.
    \end{enumerate}
\end{theorem}

\begin{proof}
    The result follows from Proposition~\ref{prop:ordinary monomial basis SL SO}, by realizing an ordinary monomial in $\mathcal{B}^G$ as an arc diagram in the obvious way; details are the same as in the proof of Theorem~\ref{thm:multilinear GL O Sp}.
    The conditions on the hyperedges follow immediately from Lemma~\ref{lemma:Sigma for SL SO}.
\end{proof}

    \begin{figure}[b]
        
     \begin{subfigure}[t]{0.48\textwidth}
         \centering
         \scalebox{0.7}{
\begin{tikzpicture}[-,auto,node distance=.75cm,
  very thick,plain node/.style={minimum size=0.6cm,inner sep=0pt, circle,draw,font=\sffamily\bfseries, fill=white}, painted node/.style={minimum size=0.6cm,inner sep=0pt,circle,draw,font=\sffamily\bfseries,fill=lightgray}, smaller node/.style={minimum size=0.4cm,inner sep=0pt,circle,draw,fill=lightgray}, bend left = 60]

\node[plain node] (1*) {1*};
\node[painted node] (2*) [right of=1*] {2*};
\node[plain node] (3*) [right of=2*] {3*};
\node[painted node] (4*) [right of=3*] {4*};
\node[painted node] (5*) [right of=4*] {5*};
\node[plain node] (6*) [right of=5*] {6*};
\node[plain node] (7*) [right of=6*] {7*};
\node[painted node] (8*) [right of=7*] {8*};
\node (divider) [right of=8*] {$\smash{\Bigg|}$};
\node[plain node] (1) [right of=divider] {1};
\node[plain node] (2) [right of=1] {2};
\node[plain node] (3) [right of=2] {3};
\node[plain node] (4) [right of=3] {4};
\node[plain node] (5) [right of=4] {5};

\draw [bend left=60] (2*) to (4);
\draw [bend left=70] (2*) to (4);
\draw [bend left=60] (4*) to (3);
\draw [bend left=60] (5*) to (1);
\draw [bend left=60] (7*) to (2);
\draw [bend left=70] (7*) to (2);
\draw [bend left=50] (7*) to (2);
\draw [bend left=60] (6*) to (5);

\node[smaller node] (a) [below of=1*] {};
\node[smaller node] (b) [below of=4*] {};
\node[smaller node] (c) [below of=5*] {};
\node[smaller node] (d) [below of=7*] {};

\node[smaller node] (e) [below of=a] {};
\node[smaller node] (f) [below of=e] {};
\node[smaller node] (g) [below of=f] {};
\draw[dotted] (2*) -- (a) -- (e) -- (f) -- (g);

\node[smaller node] (h) [right of=e] {};
\node[smaller node] (i) [below of=h] {};
\node[smaller node] (j) [below of=i] {};
\draw[dotted] (4*) -- (b) -- (h) -- (i) -- (j);

\node[smaller node] (k) [below of=c] {};
\node (ll) [below of=k] {};
\node[smaller node] (l) [left of =ll] {};
\node[smaller node] (m) [below of=l] {};
\draw[dotted] (5*) -- (c) -- (k) -- (l) -- (m);

\node[smaller node] (n) [right of=k] {};
\node[smaller node] (o) [below of=n] {};
\node[smaller node] (p) [right of=m] {};
\draw[dotted] (8*) -- (d) -- (n) -- (o) -- (p);

\end{tikzpicture}
}
         \caption{$G = \SL(V)$, where $n = \dim V = 4$, with $p=8$ and $q=5$.
         There are five hyperedges, depicted by the five rows of shaded dots: the topmost hyperedge is $\{2^*, 4^*, 5^*, 8^*\}$, the next is $\{1^*, 4^*, 5^*, 8^*\}$, and so on until the bottommost hyperedge $\{1^*, 2^*, 4^*, 5^*\}$.
         To verify that the hyperedges form a chain, we observe that the dotted ``tentacles'' are nonintersecting and travel weakly to the left as they descend.
         The degree sequence is $\mathbf{d} = ((4,6,0,5,5,3,4,1), (1,3,1,2,1))$.
         }
         \label{subfig:poly_arc_SL}
     \end{subfigure}
     \hfill
     \begin{subfigure}[t]{0.48\textwidth}
         \centering
         \scalebox{0.7}{
\begin{tikzpicture}[-,auto,node distance=.75cm,
  very thick, inner sep=0 pt, plain node/.style={circle,draw,font=\sffamily\bfseries,fill=white,minimum size = 0.6cm}, painted node/.style={minimum size=0.6cm,inner sep=0pt,circle,draw,font=\sffamily\bfseries,fill=lightgray}]
\tikzstyle{every loop}=[-, distance = 10]

\node[painted node] (1) {1};
\node[plain node] (2) [right of=1] {2};
\node[painted node] (3) [right of=2] {3};
\node[plain node] (4) [right of=3] {4};
\node[plain node] (5) [right of=4] {5};
\node[plain node] (6) [right of=5] {6};
\node[painted node] (7) [right of=6] {7};
\node[plain node] (8) [right of=7] {8};
\node[painted node] (9) [right of=8] {9};
\node[plain node] (10) [right of=9] {10};

\draw [bend left = 65] (2) to (4);
\draw [bend left = 75] (3) to (6);
\draw [bend left = 80] (9) to (10);
\draw [bend left = 85] (1) to (5);
\draw [bend left = 50] (3) to (6);

%%% Loops go here:

\path (7) edge [loop above, distance=20] node {} (7);

\node at (0,-3.2) {};

\end{tikzpicture}
}
         \caption{$G = \SO(V)$, where $n = \dim V = 4$, with $m = 10$.
         There is one hyperedge $\{1,3,7,9\}$, depicted by the four shaded vertices.
         The degree sequence is $\mathbf{d} = (1,2,3,1,1,2,3,0,2,1)$.}
         \label{subfig:poly_arc_SO}
     \end{subfigure}
        \caption{\label{fig:poly arc SL SO} Examples of the arc diagrams in Theorem~\ref{thm:poly invariants SL SO}, where $G = \SL(V)$ or $\SO(V)$.
        Each arc diagram represents an ordinary monomial in $\mathcal{B}^G_{\mathbf{d}}$, where the arcs represent contractions $f_{ij}$ and the hyperedges $A^*$ represent the functions $\det^*(A)$.}
\end{figure}

In our arc diagrams, we depict the hyperedges as follows, in order to easily verify the conditions given in Theorem~\ref{thm:poly invariants SL SO}; see Figure~\ref{fig:poly arc SL SO} for examples.
We depict a hyperedge $A = \{a_1, \ldots, a_n\}$ by a row of $n$ shaded circles, aligned horizontally with the vertices $a_1, \ldots, a_n$ (and similarly for $A^*$).
For $\SO(V)$, since there is at most one hyperedge, we simply shade the vertices $a_1, \ldots, a_n$ themselves.
For $\SL(V)$, there can be any number of hyperedges forming a chain with respect to the tableau order~\eqref{tableau order}; 
accordingly, we depict the maximal hyperedge by shading its vertices, and then we depict any additional hyperedges below the arc diagram, in decreasing order from top to bottom.
As a visual aid, for each $1 \leq k \leq n$, we draw a ``tentacle,'' consisting of dotted line segments connecting the $k$th dots in each hyperedge; as a result, the hyperedges form a chain in the tableau order if and only if these tentacles are nonintersecting and travel weakly to the left as they descend.
(See~\cite{EricksonHunzikerB} for the theory of \emph{jellyfish diagrams} in the more general context of modules of covariants.)

\section{Linear bases for tensor invariants via arc diagrams}
\label{sec:tensor bases}

Tensor (i.e., multilinear) invariants can be viewed as a special case of polynomial invariants.
As we explain below, by restricting our attention to arc diagrams which are \emph{1-regular} (i.e., every vertex has degree 1), we obtain linear bases of tensor invariants as special cases of Theorems~\ref{thm:poly invariants GL O Sp} and~\ref{thm:poly invariants SL SO}.

Let $\mathbf{1} \coloneqq (1, \ldots, 1)$.
There is a canonical isomorphism of $\GL(V)$-modules given by
\begin{align}
\label{iso GL}
\begin{split}
V^{\otimes p} \otimes V^{*\otimes q}
 &\longrightarrow \C[V^{*p} \oplus V^q]_{(\mathbf{1}, \mathbf{1})}, \\
 w_1 \otimes \cdots \otimes w_p \otimes \psi_1 \otimes \cdots \otimes \psi_q &\longmapsto \Big[ (\phi_1, \ldots, \phi_p, v_1, \ldots, v_q) \mapsto \prod_{i=1}^p \phi_i(w_i) \prod_{j=1}^q \psi_j(v_j) \Big]
 \end{split}
 \end{align}
and extended by linearity.
By Theorem~\ref{thm:poly invariants GL O Sp}\ref{GL in poly thm}, we see that $\C[V^{*p} \oplus V^q]^{\GL(V)}_{(\mathbf{1}, \mathbf{1})}$ is nonzero if and only if $p=q$, since this is the only way that a bipartite arc diagram can be 1-regular.
Therefore, from now on we will write $m$ for the common parameter $p=q$.
Note that the set of 1-regular $(m,m)$-bipartite arc diagrams --- that is, the set of $(m,m)$-bipartite \emph{perfect matchings} --- is in natural bijection with the symmetric group $\mathfrak S_m$: in particular, each element $\sigma \in \mathfrak{S}_m$ gives the arc diagram where the vertex $i^*$ is connected to the vertex $\sigma(i)$. 
By Theorem~\ref{thm:poly invariants GL O Sp}\ref{GL in poly thm}, this arc diagram represents the invariant polynomial $\prod_{i=1}^m f_{i, \sigma(i)}$, whose preimage under the isomorphism~\eqref{iso GL} is 
\begin{equation}
\label{preimage GL}
\sum_{\mathbf{c} \in [n]^m} \left(\bigotimes_{i=1}^m e_{c_i}\right) \otimes \left(\bigotimes_{j=1}^m \varepsilon_{c_{\sigma^{-1}(j)}}\right),
\end{equation}
where $\{e_1, \ldots, e_n\}$ and $\{\varepsilon_1, \ldots, \varepsilon_n\}$ are dual bases for $V$ and $V^*$, respectively.
There is an even more transparent (and coordinate-free) way to view the arc diagram as a basis element, if we view $V^{\otimes m} \otimes V^{*\otimes m}$ as its own dual space, via the natural duality map $V^{*\otimes m} \cong (V^{\otimes m})^*$:
from this perspective, the perfect matching defined by $\sigma \in \mathfrak{S}_m$, which previously represented the tensor~\eqref{preimage GL}, instead represents the functional on $V^{\otimes m} \otimes V^{*\otimes m}$ defined on simple tensors as follows (see~\cite{GW}*{\S5.3.1} for further details):
\begin{equation}
    \label{preimage GL dual}
    v_1 \otimes \cdots \otimes v_m \otimes \phi_1 \otimes \cdots \otimes \phi_m \longmapsto \prod_{i=1}^m \phi_i(v_{\sigma(i)}).
\end{equation}

Likewise for $G = \O(V)$ or $\Sp(V)$, there is a canonical isomorphism of $G$-modules given by
\begin{align}
\label{iso O Sp}
\begin{split}
    V^{\otimes m} &\longrightarrow \C[V^m]_{\mathbf{1}},\\
    w_1 \otimes \cdots \otimes w_m &\longmapsto \left[ (v_1, \ldots, v_m) \mapsto \begin{cases} 
    \prod_{i=1}^m b(v_i, w_i), & G = \O(V),\\[1ex]
    \prod_{i=1}^m \omega(v_i, w_i), & G = \Sp(V)
    \end{cases}
    \right]
    \end{split}
\end{align}
and extended by linearity.
By parts~\ref{O in poly thm} and~\ref{Sp in poly thm} of Theorem~\ref{thm:poly invariants GL O Sp}, we see that $\C[V^m]^G_{\mathbf{1}}$ is nonzero if and only if $m$ is even, since this is the only way an arc diagram can be 1-regular.
Therefore from now on we assume that $m$ is even.
The set of 1-regular arc diagrams is nothing other than the set of perfect matchings on $[m]$.
By Theorem~\ref{thm:poly invariants GL O Sp}, the perfect matching $\{\{i_1, j_1\}, \ldots, \{i_{m/2}, j_{m/2}\}\}$, where each $i_k < j_k$, represents the invariant polynomial $\prod_{k=1}^{m/2} f_{i_k,j_k}$, whose preimage under the isomorphism~\eqref{iso O Sp} is
\begin{equation}
    \label{preimage O Sp}
    \sum_{\mathbf{c} \in [n]^{m/2}}
    \cdots \otimes \posarrow{e^{\phantom{c_k}}_{c_k}}{i_k} \otimes \cdots \otimes \posarrow{e^{c_k}_{\phantom{c_k}}}{j_k} \otimes \cdots,
\end{equation}
where $\{e_1, \ldots, e_n\}$ and $\{e^1, \ldots, e^n\}$ are dual bases of $V$ with respect to the form $b$ or $\omega$, and where the $i_k$ and $j_k$ designate tensor factors.
Just as for $\GL(V)$ above, there is also a coordinate-free interpretation whereby the perfect matching which represents the tensor~\eqref{preimage O Sp} is instead viewed as the functional on $V^{\otimes m}$ defined as follows (see details in~\cite{GW}*{\S5.3.2}):
\begin{equation}
    \label{preimage O Sp dual}
    v_1 \otimes \cdots \otimes v_m \longmapsto \begin{cases} 
    \prod_{k=1}^{m/2} b(v_{i_k}, v_{j_k}), & G = \O(V), \\[1ex]
    \prod_{k=1}^{m/2} \omega(v_{i_k}, v_{j_k}), & G = \Sp(V).
    \end{cases}
\end{equation}

The 1-regular condition obviates the need to distinguish between strict and weak nestings, since no two arcs can share a vertex.
Hence in the following two theorems, we drop the qualifiers ``strict'' and ``weak.''

\begin{theorem}[Tensor invariants for $\GL(V)$, $\O(V)$, and $\Sp(V)$]\
\label{thm:multilinear GL O Sp}

\begin{enumerate}[label=\textup{(\alph*)}]
    \item \label{GL in tensor thm} Let $\dim V = n$.
    A linear basis for $(V^{\otimes m} \otimes V^{* \otimes m})^{\GL(V)}$ is given by the set of $(m,m)$-bipartite, 1-regular arc diagrams that do not contain an $(n+1)$-nesting.
    The arc diagram given by the permutation $\sigma \in \mathfrak{S}_m$ represents the tensor~\eqref{preimage GL}, or equivalently, the functional~\eqref{preimage GL dual}.

    \item \label{O in tensor thm} Let $\dim V = n$.
    A linear basis for $(V^{\otimes m})^{\O(V)}$ is given by the set of 1-regular arc diagrams on $[m]$ in which every collection of $n+1$ many arcs contains a nested pair.
    The matching $\{ \{i_1, j_1\}, \ldots, \{i_{m/2}, j_{m/2}\}\}$ represents the tensor~\eqref{preimage O Sp}, or equivalently, the functional~\eqref{preimage O Sp dual}.

    \item \label{Sp in tensor thm} Let $\dim V = 2n$.
    A linear basis for $(V^{\otimes m})^{\Sp(V)}$ is given by the set of 1-regular arc diagrams on $[m]$ that do not contain an $(n+1)$-nesting.
    The matching $\{ \{i_1, j_1\}, \ldots, \{i_{m/2}, j_{m/2}\}\}$ represents the tensor~\eqref{preimage O Sp}, or equivalently, the functional~\eqref{preimage O Sp dual}.

    \end{enumerate}
\end{theorem}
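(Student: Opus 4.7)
The plan is to deduce this theorem directly from Theorem~\ref{thm:poly invariants GL O Sp} by specializing to the multidegree $\mathbf{1}$ (or $(\mathbf{1}, \mathbf{1})$) component, using the canonical isomorphisms~\eqref{iso GL} and~\eqref{iso O Sp} that identify tensor spaces with multilinear polynomials. Since each isomorphism is $G$-equivariant, taking $G$-fixed points on both sides yields
\[
(V^{\otimes m} \otimes V^{*\otimes m})^{\GL(V)} \cong \C[V^{*m} \oplus V^m]^{\GL(V)}_{(\mathbf{1}, \mathbf{1})}, \qquad (V^{\otimes m})^G \cong \C[V^m]^G_{\mathbf{1}}
\]
for $G = \O(V), \Sp(V)$. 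Because multigraded bases restrict to bases of each multigraded component, it therefore suffices to extract from Theorem~\ref{thm:poly invariants GL O Sp} the basis elements whose degree sequence is $\mathbf{1}$ (respectively $(\mathbf{1}, \mathbf{1})$), and then translate them back through the isomorphisms.

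First I would observe the elementary but crucial combinatorial fact: an arc diagram has degree sequence $\mathbf{1}$ precisely when every vertex has degree one, i.e.\ when the diagram is $1$-regular. For the bipartite setting of part~\ref{GL in tensor thm}, this forces $p = q = m$ and puts the diagram in bijection with $\mathfrak{S}_m$ via $i^* \leftrightarrow \sigma(i)$; for the unipartite setting of parts~\ref{O in tensor thm} and~\ref{Sp in tensor thm}, this forces $m$ to be even and identifies the diagrams with perfect matchings of $[m]$. In the symplectic case, the ``no self-loops'' condition in part~\ref{Sp in poly thm} of Theorem~\ref{thm:poly invariants GL O Sp} is automatic for $1$-regular diagrams, which is why only the nonnesting condition survives.

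Next I would transport the nonnesting/weak-nonnesting conditions verbatim from Theorem~\ref{thm:poly invariants GL O Sp}: strictly $(n+1)$-nonnesting in parts~\ref{GL in tensor thm} and~\ref{Sp in tensor thm}, and weak nonnesting number at most $n$ in part~\ref{O in tensor thm}. Finally, I would verify that the element corresponding to a basis diagram is as claimed. For $\GL(V)$, the polynomial invariant $\prod_{i=1}^{m} f_{i,\sigma(i)}$ associated with $\sigma \in \mathfrak{S}_m$ must be pulled back through~\eqref{iso GL}; expanding each $f_{i,\sigma(i)}(\phi, v) = \phi_i(v_{\sigma(i)})$ in terms of the dual bases and matching coefficients yields precisely the tensor~\eqref{preimage GL}, and under the duality $V^{*\otimes m} \cong (V^{\otimes m})^*$ the same object is given by the functional~\eqref{preimage GL dual}. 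An analogous calculation for $\O(V)$ and $\Sp(V)$ uses~\eqref{iso O Sp} together with the identities $b(v, w) = \sum_k v_k w^k$ or $\omega(v,w) = \sum_k v_k w^k$ in dual bases, producing~\eqref{preimage O Sp} and its functional form~\eqref{preimage O Sp dual}.

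There is essentially no serious obstacle here, since all the representation-theoretic and combinatorial content is packaged into Theorem~\ref{thm:poly invariants GL O Sp}; the only point requiring any care is the bookkeeping in the last paragraph, to ensure the explicit coordinate expressions~\eqref{preimage GL}--\eqref{preimage O Sp dual} are indeed the images of the arc-diagram basis elements under the $G$-module isomorphisms~\eqref{iso GL} and~\eqref{iso O Sp}. Once this is verified on simple tensors and extended by linearity, the theorem follows in all three parts.
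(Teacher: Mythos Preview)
Your proposal is correct and matches the paper's approach exactly: the paper does not give a separate formal proof of this theorem, but instead derives it in the discussion of Section~\ref{sec:tensor bases} by specializing Theorem~\ref{thm:poly invariants GL O Sp} to the multidegree-$\mathbf{1}$ component via the $G$-equivariant isomorphisms~\eqref{iso GL} and~\eqref{iso O Sp}, noting that degree sequence $\mathbf{1}$ is equivalent to $1$-regularity, and computing the preimages~\eqref{preimage GL}--\eqref{preimage O Sp dual} explicitly. Your observation that the ``no self-loops'' condition in part~\ref{Sp in poly thm} becomes automatic for $1$-regular diagrams is implicit in the paper's presentation as well.
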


To apply the program above to the subgroups $\SL(V)$ and $\SO(V)$, we simply account for the presence of hyperedges.
Taking the dual perspective that gave us the coordinate-free interpretations~\eqref{preimage GL dual} and~\eqref{preimage O Sp dual} above, we again view each diagram as the product of its arcs and hyperedges, where for $\SL(V)$ we have
\begin{align}
    \label{preimage SL}
    \begin{split}
    \text{arc $\{i^*,j\}$} & \leadsto [v_1 \otimes \cdots v_p \otimes \phi_1 \otimes \cdots \otimes \phi_q \mapsto \phi_j(v_i)],\\
    \text{hyperedge $\{a^*_1, \ldots, a^*_n\}$} & \leadsto [v_1 \otimes \cdots v_p \otimes \phi_1 \otimes \cdots \otimes \phi_q \mapsto \det(v_{a_1}, \ldots, v_{a_n})],\\
    \text{hyperedge $\{b_1, \ldots, b_n\}$} & \leadsto [v_1 \otimes \cdots v_p \otimes \phi_1 \otimes \cdots \otimes \phi_q \mapsto \det(\phi_{b_1}, \ldots, \phi_{b_n})],
    \end{split}
\end{align}
and for $\SO(V)$ we have
\begin{align}
    \label{preimage SO}
    \begin{split}
    \text{arc $\{i,j\}$} & \leadsto [v_1 \otimes \cdots \otimes  v_m \mapsto b(v_i, v_j)],\\
   \text{hyperedge $\{a_1, \ldots, a_n\}$} & \leadsto [v_1 \otimes \cdots \otimes v_m \mapsto \det(v_{a_1}, \ldots, v_{a_n})].
    \end{split}
\end{align}

\begin{theorem}[Tensor invariants for $\SL(V)$ and $\SO(V)$]
\label{thm:multilinear SL SO}

Let $\dim V = n$.

\begin{enumerate}[label=\textup{(\alph*)}]
    \item \label{SL in tensor thm}
    A linear basis for $(V^{\otimes p} \otimes V^{*\otimes q})^{\SL(V)}$ is given by the set of arc diagrams described in Theorem~\ref{thm:poly invariants SL SO}\ref{GL in tensor thm} that are 1-regular.
    Each such diagram represents the product of its arcs and hyperedges described in~\eqref{preimage SL}.

    \item \label{SO in tensor thm} 
    A linear basis for $(V^{\otimes m})^{\SO(V)}$ is given by the set of arc diagrams described in Theorem~\ref{thm:poly invariants SL SO}\ref{O in tensor thm} that are 1-regular.
    Each such diagram represents the product of its arcs and (possibly) hyperedge, as described in~\eqref{preimage SO}.
\end{enumerate}
    
\end{theorem}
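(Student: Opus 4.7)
The plan is to deduce this theorem from Theorem~\ref{thm:poly invariants SL SO} by specializing the multidegree to $\mathbf{1}$, exactly in parallel with the passage from Theorem~\ref{thm:poly invariants GL O Sp} to Theorem~\ref{thm:multilinear GL O Sp}. The two ingredients are: (i) a suitable equivariant isomorphism identifying the multilinear invariants with the multidegree-$\mathbf{1}$ component of the polynomial invariants; and (ii) the observation that imposing degree sequence $\mathbf{1}$ on a (hyper)graph is precisely 1-regularity.

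For (i), the canonical isomorphism~\eqref{iso GL} is $\GL(V)$-equivariant, hence a fortiori $\SL(V)$-equivariant, so taking $\SL(V)$-invariants on both sides gives
\[
(V^{\otimes p} \otimes V^{*\otimes q})^{\SL(V)} \;\cong\; \C[V^{*p} \oplus V^q]^{\SL(V)}_{(\mathbf{1},\mathbf{1})}.
\]
Likewise \eqref{iso O Sp} is $\O(V)$-equivariant, hence $\SO(V)$-equivariant, yielding $(V^{\otimes m})^{\SO(V)} \cong \C[V^m]^{\SO(V)}_{\mathbf{1}}$. For (ii), recall that in a hypergraph the degree of a vertex is the number of incident (hyper)edges, so requiring the degree sequence to equal $\mathbf{1}$ is equivalent to demanding that every vertex be contained in exactly one arc or one hyperedge. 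Applying part~\ref{SL in poly thm} (resp.~\ref{SO in poly thm}) of Theorem~\ref{thm:poly invariants SL SO} at this multidegree therefore produces exactly the set of 1-regular diagrams claimed in the statement. In passing, this recovers the familiar branching conditions for nontriviality: for $\SL(V)$, the presence of $k$ hyperedges (all on one side) forces $|p-q| = nk$; for $\SO(V)$, the presence or absence of a single hyperedge forces $m \equiv n$ or $m \equiv 0 \pmod{2}$, respectively.

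It remains to translate the polynomial-invariant description of each basis element through the isomorphisms above to obtain the tensor/functional description in \eqref{preimage SL} and \eqref{preimage SO}. For the arcs, this translation was already carried out in~\eqref{preimage GL}--\eqref{preimage O Sp dual}. For the hyperedges, one unwinds the duality $V^{*\otimes n} \cong (V^{\otimes n})^*$ and observes that under~\eqref{iso GL} the polynomial $(\phi_{i_1}, \ldots, \phi_{i_n}) \mapsto \det(\phi_{i_1}, \ldots, \phi_{i_n})$ on $V^{*\otimes n}$ is identified with the functional $v_{i_1} \otimes \cdots \otimes v_{i_n} \mapsto \det(v_{i_1}, \ldots, v_{i_n})$ on $V^{\otimes n}$, as claimed; the analogous statement for~\eqref{preimage SO} follows from~\eqref{iso O Sp}.

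The only real obstacle here is bookkeeping: one must confirm that ``1-regular'' is interpreted with respect to the full hypergraph (so that a shaded vertex belonging to a hyperedge is not simultaneously an endpoint of any arc), and that the hyperedge compatibility conditions of Theorem~\ref{thm:poly invariants SL SO} remain consistent under the 1-regularity constraint. Both checks are routine; in the $\SL$ case the chain of hyperedges occupies disjoint blocks of $n$ vertices on one side, with the remaining vertices then paired bijectively by arcs, and the nesting/chain conditions are unaffected. Thus no essentially new combinatorial argument is needed beyond what already went into Theorem~\ref{thm:poly invariants SL SO}.
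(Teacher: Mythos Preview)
Your proposal is correct and matches the paper's approach exactly: the paper does not give a separate formal proof of this theorem, but rather presents it as an immediate consequence of the discussion in Section~\ref{sec:tensor bases}, namely the $\GL(V)$- (hence $\SL(V)$-) equivariance of~\eqref{iso GL}, the $\O(V)$- (hence $\SO(V)$-) equivariance of~\eqref{iso O Sp}, and the observation that the multidegree-$\mathbf{1}$ specialization of Theorem~\ref{thm:poly invariants SL SO} picks out precisely the 1-regular diagrams. Your additional remarks on the branching conditions ($|p-q| \in n\mathbb{N}$ for $\SL(V)$; $m \equiv 0$ or $n \pmod 2$ for $\SO(V)$) and on the disjointness of hyperedges forced by 1-regularity are correct and go slightly beyond what the paper spells out here (the paper records them later, in Theorem~\ref{thm:enumerate SL SO}).
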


\section{Equivalent enumerations of basis elements}
\label{sec:enumeration}

The 1-regular arc diagrams and hyperdiagrams we described in the previous section seem to give the most natural model for linear bases of tensor invariants, since arcs between pairs of vertices are viewed as contractions between pairs of vectors (or a vector and covector) in the obvious way.
If, however, one is merely interested in \emph{enumerating} basis elements --- that is, computing the dimensions of the $G$-invariant subspaces --- then it is natural to seek other, perhaps more familiar combinatorial objects which are in bijection with these bases.
As it turns out, these dimensions can be counted in terms of several quite different types of objects.
Foremost among these are standard Young tableaux, which afford a uniform description of the dimension of the tensor invariants for all of the classical groups.
As far as possible, we also include in our theorems other combinatorial interpretations in terms of well-known objects.

Recall that a \emph{standard Young tableau} of shape $\lambda \vdash m$ is a semistandard tableau whose entries are $1, \ldots, m$, each occurring exactly once.
Equivalently, a semistandard tableau is standard if and only if it has weight $\mathbf{1}$.
Let $\SYT(\lambda)$ denote the set of standard Young tableaux of shape $\lambda$.
It is well known~\cite{Fulton}*{p.~53} that the number of these tableaux is given by the hook length formula $\# \SYT(\lambda) = m! / \prod_{(i,j) \in \lambda}{h_\lambda(i,j)}$, where $h_\lambda(i,j)$ denotes the length of the hook determined by the box $(i,j)$ in the Young diagram of $\lambda$.

\begin{theorem}[Dimension descriptions for $\GL(V)$, $\O(V)$, and $\Sp(V)$]\
\label{thm:enumerate GL O Sp}

\begin{enumerate}[label=\textup{(\alph*)}]
    \item \label{GL in dim thm} Let $\dim V = n$.
    The dimension of $(V^{\otimes m} \otimes V^{*\otimes m})^{\GL(V)}$ equals 

    \begin{enumerate}[label=\textup{(\arabic*)}]
    
    \item the number of arc diagrams described in Theorem~\ref{thm:multilinear GL O Sp}\ref{GL in tensor thm};
    
    \item $ \displaystyle
    \displaystyle \sum_{\substack{\lambda \vdash m, \\ \ell(\lambda) \leq n}} \#\SYT(\lambda)^2$;

    \item the number of permutations of $[m]$ whose decreasing subsequences have length at most~$n$;

    \item the number of walks in the complement of the upper order ideal generated by $(1^{n+1})$ in Young's lattice, such that the walk begins at the empty diagram $\varnothing$, and consists of $m$ steps up followed by $m$ steps down.

    \end{enumerate}

    \item \label{O in dim thm} Let $\dim V = n$.
    The dimension of $(V^{\otimes m})^{\O(V)}$ equals
    
    \begin{enumerate}[label=\textup{(\arabic*)}]
    
    \item the number of arc diagrams described in Theorem~\ref{thm:multilinear GL O Sp}\ref{O in tensor thm};
    
    \item \label{SYT for O} $\displaystyle \sum_{\mathclap{\substack{\lambda \vdash m, \\ \ell(\lambda) \leq n, \\ \textup{even rows}}}} \#\SYT(\lambda)$;
    
    \item the number of fixed-point-free involutions on $[m]$ whose increasing subsequences have length at most $n$.
\end{enumerate}

    \item \label{Sp in dim thm} Let $\dim V = 2n$.
    The dimension of $(V^{\otimes m})^{\Sp(V)}$ equals

    \begin{enumerate}[label=\textup{(\arabic*)}]

        \item \label{arcs for Sp} the number of arc diagrams described in Theorem~\ref{thm:multilinear GL O Sp}\ref{Sp in tensor thm};
        
        \item \label{SYT for Sp} $\displaystyle \sum_{\mathclap{\substack{\lambda \vdash m, \\ \ell(\lambda) \leq 2n, \\ \textup{even columns}}}} \#\SYT(\lambda)$;

        \item the number of fixed-point-free involutions on $[m]$ whose decreasing subsequences have length at most $2n$;

        \item the number of cm-labeled Dyck paths of semilength $m$ with no singletons and $(n+1)$-noncrossing labels (see definitions in~\cite{Gil20}*{\S4});

        \item the number of oscillating tableaux of length $m$ with at most $n$ columns, which start and end with the empty partition;

        \item the number of walks with $m$ steps, beginning and ending at the origin, inside the intersection of the lattice $\mathbb{Z}^n$ with the cone $x_1 \geq \cdots \geq x_n \geq 0$.

    \end{enumerate}
\end{enumerate}

\end{theorem}

\begin{proof}\

    \begin{enumerate}[label=\textup{(\alph*)}]
        \item \begin{enumerate}[label=\textup{(\arabic*)}]
            \item See the proof of Theorem~\ref{thm:multilinear GL O Sp}\ref{GL in tensor thm}.
            
            \item Specializing Proposition~\ref{prop:standard monomial basis} to multidegree $\mathbf{d} = (\mathbf{1}, \mathbf{1})$, the basis $\mathcal{S}^{\GL(V)}_{(\mathbf{1}, \mathbf{1})}$ consists of bitableaux of weight $(\mathbf{1}, \mathbf{1})$, which are ordered pairs of \emph{standard} Young tableaux.
            Hence in $\mathcal{S}^{\GL(V)}_{(\mathbf{1}, \mathbf{1})}$ we specialize from $\SSYT(\lambda,m) \times \SSYT(\lambda, m)$ to $\SYT(\lambda) \times \SYT(\lambda)$.
            \item This is a well-known property of the $\RSK_{\rm A}$ correspondence restricted to $\mathcal{S}^{\GL(V)}_{(\mathbf{1}, \mathbf{1})}$, that is, to pairs of \emph{standard} Young tableaux; see~\cite{Schensted}*{Thm.~2}.
            \item The upper order ideal generated by $(1^{n+1})$ contains precisely those partitions $\lambda$ such that $\ell(\lambda) > n$.
            For any $\lambda \vdash m$ in the complement, the number of ascending walks from $\varnothing$ to $\lambda$ (which are the walks with $m$ steps) is well known to be $\#\SYT(\lambda)$.
            By symmetry, this is also the number of descending walks from $\lambda$ to $\varnothing$.
            Therefore $\#\SYT(\lambda)^2$ is the number of walks from $\varnothing$ to $\lambda$ to $\varnothing$, consisting of $m$ steps up followed by $m$ steps down.
        \end{enumerate}

        \item \label{proof O tensor b} \begin{enumerate}[label=\textup{(\arabic*)}]

            \item See the proof of Theorem~\ref{thm:multilinear GL O Sp}\ref{O in tensor thm}.
            
            \item Again, specializing Proposition~\ref{prop:standard monomial basis} to multidegree $\mathbf{1}$ is the same as restricting to $\SYT$s.
            \item \label{proof O tensor b2} Let $T \in \SYT(\lambda)$, where $\ell(\lambda) \leq n$ and $\lambda \vdash m$ has even rows.
            Let $T'$ denote the SYT obtained by transposing $T$.
            Then $T'$ has exactly $\ell(\lambda)$ many columns, all of which have even length.
            Let $M \coloneqq \RSK_{\rm C}(T')$, which is an $m \times m$ symmetric permutation matrix with trace zero (i.e., a fixed-point-free involution on $[m]$).
            By the corollary on page 718 of Knuth~\cite{Knuth}, combined with his description of $\RSK_{\rm C}$ in his Theorem 4, the number of columns in $T'$ (namely $\ell(\lambda) \leq n$) is the length of the longest increasing subsequence in this involution.
            This procedure is invertible since $\RSK_{\rm C}$ is a bijection.
        \end{enumerate}

        \item \begin{enumerate}[label=\textup{(\arabic*)}]

            \item See the proof of Theorem~\ref{thm:multilinear GL O Sp}\ref{Sp in tensor thm}.
        
            \item Again, specializing Proposition~\ref{prop:standard monomial basis} to multidegree $\mathbf{1}$ is the same as restricting to $\SYT$s.

            \item Let $T \in \SYT(\lambda)$, where $\ell(\lambda) \leq 2n$ and $\lambda \vdash m$ has even columns.
            By Proposition~\ref{prop:RSK} and by~\eqref{RSK width height}, the matrix $M \coloneqq \RSK_{\rm C}(T)$ is an $m \times m$ symmetric permutation matrix with trace zero, such that ${\rm supp}(M) \cap P$ has width at most $n$.
            Therefore each such $T$ corresponds to a fixed-point-free involution on $[m]$ given by $M$, where $M$ contains at most $n$ many 1's lying along a single path from southwest to northeast \emph{above} the diagonal.
            Since $M$ is symmetric, it contains at most $2n$ many 1's lying along a single path from southeast to northwest.
            This is equivalent to the longest decreasing subsequence in the corresponding involution.
            This procedure is invertible since $\RSK_{\rm C}$ is a bijection.

            \item This is equivalent to~\ref{SYT for Sp} by~\cite{Gil20}*{Prop.~4.2}.

            \item This is equivalent to~\ref{SYT for Sp} by~\cite{Krattenthaler}*{Thm.~4}. 
            
            \item This is a special case of~\cite{GrabinerMagyar}*{Thm.~2 (p.~244) and \S6.2}, which implies that $\dim (V^{\otimes m})^{\Sp(V)}$ equals the number of $m$-step walks beginning and ending at $\rho = (n, n-1, \ldots, 2,1)$, confined to the interior $\{ \mathbf{x} \in \mathbb{Z}^n : x_1 > \cdots > x_n > 0\}$ of the dominant Weyl chamber of type $C_n$ (i.e., for $\mathfrak{g} = \mathfrak{sp}_{2n}$).
            Translating by $-\rho$ changes the start and endpoint to the origin, and changes the strict inequalities on $\mathbf{x}$ to weak inequalities.
            (There is also a generating function given in~\cite{GrabinerMagyar}*{eqn.~(39)}.) \qedhere
        \end{enumerate}
    \end{enumerate}
\end{proof}

\begin{rem}
    In the special case $G = \O(3)$, Theorem~\ref{thm:enumerate GL O Sp}\ref{O in dim thm}\ref{SYT for O} can be found in the physics paper~\cite{Smith1985}*{p.~654} on isotropic tensors; see also the earlier work~\cite{Smith1968}.
\end{rem}

\begin{rem}
\label{rem:stable}
    When $\dim V$ is sufficiently large, the length condition $\ell(\lambda) \leq \dim V$ becomes vacuous, and the enumerations in Theorem~\ref{thm:enumerate GL O Sp} no longer depend on $n$.
    We call this the \emph{stable range}:
    \[
    \begin{cases}
        n \geq m, & G = \GL(V),\\
        n \geq m/2, & G = \O(V) \text{ or } \Sp(V).
    \end{cases}
    \]
    To verify the second case above, we observe that for $\O(V)$, any partition $\lambda \vdash m$ with even row lengths must have length at most $\frac{m}{2}$; on the other hand, for $\Sp(V)$, the even-column condition still allows partitions of length at most $m$, so that the stable range occurs when $2n \geq m$.
    In the stable range, the formulas in Theorem~\ref{thm:enumerate GL O Sp} simplify to the following:
    \begin{align*}
        \dim (V^{\otimes m} \otimes V^{*\otimes m})^{\GL(V)}=\dim {\rm End}_{\GL(V)} (V^{\otimes m}) &= m!,\\
        \dim (V^{\otimes m})^{\O(V)} = \dim (V^{\otimes m})^{\Sp(V)} &= \begin{cases} (m-1)!!, & \text{$m$ even},\\
        0, & \text{$m$ odd}.
        \end{cases}
    \end{align*}
    (See the bottom row of Table~\ref{table:OEIS}.)
    These dimensions are interpreted in~\cite{CEW}*{Cor.~7, Ex.~13} in terms of contingency tables.
    See also the unpublished survey~\cite{Callan} for a wealth of other combinatorial interpretations of the odd double factorial.
    \end{rem}

\begin{theorem}[Dimension descriptions for $\SL(V)$ and $\SO(V)$]
\label{thm:enumerate SL SO}

Let $\dim V = n$. 

\begin{enumerate}[label=\textup{(\alph*)}]
    \item \label{SL in dim thm} Let $t \coloneqq \frac{|p-q|}{n}$.
    If $t \notin \mathbb{N}$, then $(V^{\otimes p} \otimes V^{*\otimes q})^{\SL(V)}$ has dimension zero.
    If $t \in \mathbb{N}$, then the dimension of $(V^{\otimes p} \otimes V^{*\otimes q})^{\SL(V)}$ equals    \begin{enumerate}[label=\textup{(\arabic*)}]
        \item \label{arcs for SL} the number of arc diagrams described in Theorem~\ref{thm:multilinear SL SO}\ref{SL in tensor thm};

        \item \label{SYT for SL} $\displaystyle \sum_{\mathclap{\substack{\lambda \vdash \min\{p,q\}, \\ \ell(\lambda) \leq n}}} \#\SYT(\lambda + t^n) \cdot \#\SYT(\lambda)$,
        
        where $\lambda + t^n$ is the shape obtained from $\lambda$ by adding $t$ boxes to all $n$ rows.
    \end{enumerate}

    \item \label{SO in dim thm} The dimension of $(V^{\otimes m})^{\SO(V)}$ equals zero if and only if $m$ is odd and $n$ is even or greater than $m$.
    Otherwise, the dimension equals

    \begin{enumerate}[label=\textup{(\arabic*)}]
    \item \label{arcs for SO} the number of arc diagrams described in Theorem~\ref{thm:multilinear SL SO}\ref{SO in tensor thm};
    
    \item \label{proof SO tensor 1} $\displaystyle \sum_{\mathclap{\substack{\lambda \vdash m, \\ \ell(\lambda) \leq n, \\ \textup{even rows}}}} \#\SYT(\lambda) 
    + \sum_{\mathclap{\substack{\lambda \vdash m-n, \\ \ell(\lambda) \leq n, \\ \textup{even rows}}}} \#\SYT(\lambda + 1^n)$;
    
    \item the number of involutions on $[m]$ whose increasing subsequences have length at most $n$, and which have either $0$ or $n$ fixed points;

    \item \textup{(if $n = 2r$ is even)} the number of walks with $m$ steps, beginning and ending at the origin, inside the intersection of the lattice $\mathbb{Z}^r$ with the cone $x_1 \geq \cdots \geq x_{r-1} \geq |x_r|$.

    \end{enumerate}
    \end{enumerate}

\end{theorem}

\begin{proof}\

\begin{enumerate}[label=\textup{(\alph*)}]
    \item 
    \begin{enumerate}[label=\textup{(\arabic*)}]
        \item See the proof of Theorem~\ref{thm:multilinear SL SO}\ref{SL in tensor thm}.

        \item Specializing to multidegree $\mathbf{d} = (\mathbf{1}, \mathbf{1})$, the basis $\mathcal{S}^{\SL(V)}_{(\mathbf{1}, \mathbf{1})}$ is obtained by changing ``SSYT'' to ``SYT'' in Proposition~\ref{prop:standard monomials SL SO}\ref{subprop:standard monomials SL SO a}.
        In that proposition, if $p \geq q$, then the parameter $r \geq 0$ is an integer such that $p = q+rn$; if $p < q$, then the parameter $s>0$ is an integer such that $p + sn = q$.
        Let $u$ denote this parameter $r$ or $s$, as the case may be; then $\mathcal{S}^{\SL(V)}_{(\mathbf{1}, \mathbf{1})}$ is nonempty if and only if $u \in \mathbb{N}$ is such that $\min\{p,q\} + un = \max\{p,q\}$, or equivalently, $u = \frac{|p-q|}{n}$.
        The result now follows by substituting $t$ for $u$.
    
    \end{enumerate}

    \item \begin{enumerate}[label=\textup{(\arabic*)}]

        \item See the proof of Theorem~\ref{thm:multilinear SL SO}\ref{SO in tensor thm}. 
        
        \item Specializing to multidegree $\mathbf{d} = \mathbf{1}$, the basis $\mathcal{S}^{\SO(V)}_{\mathbf{1}}$ is obtained by changing ``SSYT'' to ``SYT'' in Proposition~\ref{prop:standard monomials SL SO}\ref{subprop:standard monomials SL SO b}.
        The set of even-rowed shapes $\lambda \vdash m$ is nonempty if and only if $m$ is even;
        the set of even-rowed shapes $\lambda \vdash m-n$ is nonempty if and only if $m-n$ is nonnegative and even.
        
        \item By Theorem~\ref{thm:enumerate GL O Sp}\ref{proof O tensor b}\ref{proof O tensor b2}, the first sum in part~\ref{proof SO tensor 1} equals the number of involutions on $[m]$ whose increasing subsequences have length at most $n$ and which have no fixed points.
        Now we consider the second sum,
        where the shapes in question have the form $\lambda + 1^n$, namely, all $n$ rows have odd length.
        Consider the transpose $T'$ of each tableau $T$ appearing in this sum; each $T'$ has $n$ columns, all of odd length.
        By Theorem 4 in Knuth~\cite{Knuth}, there is a generalization of $\RSK_{\rm C}$ giving a bijection $T' \mapsto M$ between SSYTs and involutions on $[m]$ with increasing subsequences no longer than $n$, such that the number of odd-length columns in $T$ equals the number of fixed points of $M$.
        The result follows by combining the first and second sums.
        
        \item This is a special case of~\cite{GrabinerMagyar}*{Thm.~2 (p.~244) and \S6.3}, which implies that $\dim(V^{\otimes m})^{\SO(V)}$ equals the number of $m$-step walks beginning and ending at $\rho = (r-1, r-2, \ldots, 1, 0)$, confined to the interior $\{\mathbf{x} \in \mathbb{Z}^r : x_1 > \cdots > x_{r-1} > |x_r|\}$ of the dominant Weyl chamber of type $D_r$ (i.e., for $\mathfrak{g} = \mathfrak{so}_{2r}$).
        Translating by $-\rho$ changes the start and endpoint to the origin, and changes the strict inequalities on $\mathbf{x}$ to weak inequalities.
        (There is also a generating function given in~\cite{GrabinerMagyar}*{eqn.~(43)}.) \qedhere  
    \end{enumerate}
       
\end{enumerate}
\end{proof}

\begin{rem}
\label{rem:special stable}
    In parts~\ref{SL in dim thm} and~\ref{SO in dim thm} of Theorem~\ref{thm:enumerate SL SO}, if $n > \max\{p,q\}$ or $n > m$ (respectively), then the dimension of the $\SL(V)$- or $\SO(V)$-invariants is the same as for $\GL(V)$ or $\O(V)$ in parts~\ref{GL in dim thm} and~\ref{O in dim thm} of Theorem~\ref{thm:enumerate GL O Sp}.
    This is the range in which it is impossible for the arc diagram to contain an order-$n$ hyperedge.
    For $\SL(V)$, in the special case where $p=0$ or $q=0$, Theorem~\ref{thm:enumerate SL SO}\ref{SL in dim thm}\ref{SYT for SL} implies that 
    \[
    \dim (V^{\otimes m})^{\SL(V)}=\dim (V^{*\otimes m})^{\SL(V)} = \begin{cases} 
    \#\SYT(t^n), & t \coloneqq \frac{m}{n} \in \mathbb{N},\\
    0 & \text{otherwise.}
    \end{cases}
    \]
    These numbers $\#\SYT(t^n)$ are called the $n$-dimensional Catalan numbers; see also~\cites{Gil20, BostanEtAl}, as well as the random walk interpretation in~\cite{GrabinerMagyar}*{top of p.~256}.
\end{rem}

\section*{Appendix: Tables}

In Table~\ref{table:OEIS}, we collect known sequences obtained by listing off the dimensions 
in Theorems~\ref{thm:enumerate GL O Sp} and~\ref{thm:enumerate SL SO} for fixed values of $n$.
For each sequence we give a brief description and its identifier in the Online Encyclopedia of Integer Sequences (OEIS).
Upon fixing one of the five classical groups (with the exception of $\SL(V)$, due to the double parameters $p$ and $q$), one obtains a ``sequence of sequences'' from the OEIS, which approaches a limiting sequence as $n$ approaches~$\infty$ (see the bottom row, and compare with Remarks~\ref{rem:stable} and~\ref{rem:special stable}).

In Tables~\ref{table:SL example}--\ref{table:Sp example}, we present concrete examples illustrating some of the sequences in Table~\ref{table:OEIS} in terms of tensor invariants, arc diagrams, and standard Young tableaux.
Recalling Proposition~\ref{prop:standard monomial basis} and Definition~\ref{def: M SL SO}, we emphasize that the RSK correspondence yields an explicit bijection $\mathcal{B}^{G}_{\mathbf{d}} \longleftrightarrow \mathcal{S}^G_{\mathbf{d}}$, that is, between the arc diagrams in Theorems~\ref{thm:multilinear GL O Sp} and~\ref{thm:multilinear SL SO}, on one hand, and the standard Young tableaux in Theorems~\ref{thm:enumerate GL O Sp} and~\ref{thm:enumerate SL SO} on the other hand.
Therefore, in Tables~\ref{table:SL example}--\ref{table:Sp example}, we depict each tableau directly below the arc diagram to which it corresponds via RSK.

\newpage

\begin{sidewaystable}[ph!]
    \centering
    \resizebox{\linewidth}{!}{
    \begin{tblr}{colspec={|Q[m,c]|Q[m,c]|Q[m,c]|Q[m,c]|Q[m,c]|Q[m,c]|},stretch=1.5}

\hline

$n$ & $\GL(V)$ & $\SL(V)$ & {$\O(V)$ \\ (interlace with 0's)} & $\SO(V)$ & {$\Sp(V)$ \\ (interlace with 0's)} \\

\hline[2pt]

$1$ & {\href{https://oeis.org/A000012}{A000012} \\ $(1,1,1,\ldots)$} & {\href{https://oeis.org/A000012}{A000012} \\ $(1,1,1,\ldots)$} & {\href{https://oeis.org/A000012}{A000012} \\ $(1,1,1,\ldots)$} & {\href{https://oeis.org/A000012}{A000012} \\ $(1,1,1,\ldots)$} & {\href{https://oeis.org/A000108}{A000108} \\ $(1, 2, 5, 14, 42, 132, \ldots)$ \\ Catalan numbers} \\
% {\href{https://oeis.org/A126120}{A126120} \\$(0,1,0,1,0,2,0,5,\ldots)$ \\ Catalan numbers \\ with 0's} \\

\hline

$2$ & {\href{https://oeis.org/A000108}{A000108} \\ $(1, 2, 5, 14, 42,132, \ldots)$ \\ Catalan numbers} & {\href{https://oeis.org/A126120}{A126120} \\ $(0,1,0,2,0,5,\ldots)$ \\ Catalan numbers with 0's \\ (index = $p+q$)} & {\href{https://oeis.org/A001700}{A001700} \\ \href{https://oeis.org/A088218}{A088218} \\ $(1, 3, 10, 35, 126, 462, \ldots)$} & {\href{https://oeis.org/A126869}{A126869} \\ (or \href{https://oeis.org/A000984}{A000984} with 0's) \\ $(0, 2, 0, 6, 0, 20, \ldots)$ \\ Central binomial coefficients} & {\href{https://oeis.org/A005700}{A005700} \\ $(1, 3, 14, 84, 594, 4719, \ldots)$ } \\

\hline

$3$ & {\href{https://oeis.org/A005802}{A005802} \\ $(1, 2, 6, 23, 103, 513, \ldots)$} & Depends on $p$ and $q$ & {\href{https://oeis.org/A099251}{A099251} \\ $(1, 3, 15, 91, 603, 4213, \ldots)$ \\ Bisection of Motzkin sums} & {\href{https://oeis.org/A005043}{A005043} \\ $(0, 1, 1, 3, 6, 15, \ldots)$ \\ Riordan numbers} & 
{\href{https://oeis.org/A136092}{A136092} \\(or \href{https://oeis.org/A138540}{A138540} without 0's) \\ $(1, 3, 15, 104, 909, 9449, \ldots)$} \\

\hline

$4$ & {\href{https://oeis.org/A047889}{A047889} \\ $(1, 2, 6, 24, 119, 694, \ldots)$} & Depends on $p$ and $q$ & {\href{https://oeis.org/A246860}{A246860} \\ $(1, 3, 15, 105, 903, 8778, \ldots)$} & {\href{https://oeis.org/A001246}{A001246} with 0's \\ $(0,1, 0,4, 0,25, \ldots)$ \\ Squared Catalan numbers} & {\href{https://oeis.org/A251598}{A251598}* \\ $(1, 3, 15, 105, 944, 10340, \ldots)$} \\

\hline

$5$ & {\href{https://oeis.org/A047890}{A047890} \\ $(1, 2, 6, 24, 120, 719, \ldots)$} & Depends on $p$ and $q$ & {\href{https://oeis.org/A247304}{A247304}* \\ $(1, 3, 15, 105, 945, 10263, \ldots)$} & {\href{https://oeis.org/A095922}{A095922}* \\ $(0, 1, 0, 3, 1, 15, \ldots)$} & { \\ $(1, 3, 15, 105, 945, 10394, \ldots)$} \\

\hline[2pt]

$\infty$ & {\href{https://oeis.org/A000142}{A000142} \\ $(1, 2, 6, 24, 120, 720, \ldots)$ \\ Factorials} & $\begin{cases}
   \href{https://oeis.org/A000142}{A000142},& p=q,\\ (0,0,0,\ldots), & p \neq q \end{cases}$ & \SetCell[c=3]{c}{{\href{https://oeis.org/A123023}{A123023} (or \href{https://oeis.org/A001147}{A001147} with 0's) \\ $(0,1, 0,3, 0,15, 0,105, 0,945, 0,10395, \ldots)$  \\ Odd double factorials}
   } 
   \\

   \hline

\end{tblr}
}
    \caption{OEIS entries enumerating the dimensions in Theorems~\ref{thm:enumerate GL O Sp} and~\ref{thm:enumerate SL SO}.
    For all groups except $\SL(V)$ the index in the sequences is $m$.
    For $\O(V)$ and $\Sp(V)$, the phrase ``interlace with 0's'' means that the sequence $(a_1, a_2, \ldots)$ should  be read as $(0,a_1, 0, a_2, \ldots)$.
    The bottom row (see Remarks~\ref{rem:stable} and~\ref{rem:special stable}) gives the limit as $n \rightarrow \infty$.
    The asterisks indicate the entries for which no other combinatorial interpretation is given in the OEIS besides the dimension of the invariant subspace as computed in LiE; for most of these small values of $n$, however, the OEIS entries list many other combinatorial interpretations.}
    \label{table:OEIS}
\end{sidewaystable}

\ytableausetup{boxsize=.9em,nocentertableaux}

\newpage

\begin{table}[h!]   
\centering
    \input{SL_big_example}
    \caption{For $G = \SL(V)$ with $n=\dim V=4$, we use parts~\ref{arcs for SL} and~\ref{SYT for SL} of Theorem~\ref{thm:enumerate SL SO}\ref{SL in dim thm} to compute $\dim (V^{\otimes p} \otimes V^{*\otimes q})^{\SL(V)}$ for all $p,q$ such that $p+q = 8$.
    This count is automatically zero unless $t \coloneqq \frac{|p-q|}{4}$ is an integer; this leaves us with the three possibilities given in the three rows of this table (corresponding to $t = 0$, $1$, or $2$).
    In the diagrams, we assume (without loss of generality) that $p \geq q$.
    The last column in the table enumerates the arc diagrams (equivalently, tableau pairs) to give the dimension of $(V^{\otimes p} \otimes V^{*\otimes q})^{\SL(V)}$.
    Note that the arc diagrams (equivalently, the tableau pairs) without any shading --- that is, where $t=0$ --- are precisely those which remain invariants for the full group $\GL(V)$.}
    \label{table:SL example}
\end{table}

\newpage

\begin{table}[h!]
    \centering
    \input{SO_big_example}
    \caption{For $G = \SO(V)$, we fix the tensor order $m=4$ and use parts~\ref{arcs for SO} and~\ref{proof SO tensor 1} of Theorem~\ref{thm:enumerate SL SO}\ref{SO in dim thm} to compute the dimension of $(V^{\otimes 4})^{\SO(V)}$ as $n = \dim V$ varies.
    By inspecting the 4th terms of the sequences listed in the $\SO(V)$ column of Table~\ref{table:OEIS}, we see that the desired sequence is $1,6,3,4,3,3,\ldots$, which stabilizes at $3$ for all $n \geq 5$ (see Remark~\ref{rem:special stable}).
    For each $n$, we depict the arc diagrams (equivalently, the standard Young tableaux) enumerated by this sequence.
    Note that a hyperedge, depicted by shaded vertices, is realized as the shaded first column in the corresponding tableau.
    The arc diagrams (equivalently, the tableaux) without any shading are precisely those which remain invariants for the full group $\O(V)$.}
    \label{table:SO example}
\end{table}

\newpage

\begin{table}[h!]
    \input{Sp_big_example}
    \caption{For $G = \Sp(V)$, we fix the tensor order $m=6$ and use parts~\ref{arcs for Sp} and~\ref{SYT for Sp} of Theorem~\ref{thm:enumerate GL O Sp}\ref{Sp in dim thm} to compute the dimension of $(V^{\otimes 6})^{\Sp(V)}$ as $n$ varies (where $\dim V = 2n$).
    By inspecting the 3rd terms (since $6 \div 2 = 3$, to account for the interlaced zeros) of the sequences listed in the $\Sp(V)$ column of Table~\ref{table:OEIS}, we see that the desired sequence is $5, 14, 15, 15, \ldots$, which stabilizes at $15$ for all $n \geq 3$ (see Remark~\ref{rem:stable}).
    To conserve space, in each row of the table we include only those diagrams which were not counted in the rows above.
    Therefore, each new arc diagram in the $n$th row of the table contains an $n$-nesting, and the length of its corresponding tableau is exactly $2n$.}
    \label{table:Sp example}
\end{table}

\clearpage

\subsection*{Data availability statement}

No datasets were generated or analyzed during the current study.

\subsection*{Declarations}

The authors have no competing interests to declare.
No funds, grants, or other support was received to assist with the preparation of this manuscript.

 \bibliographystyle{amsplain}
 \bibliography{references}

\end{document}